\documentclass[11pt,a4paper]{amsart}
\usepackage[a4paper,top=3cm,bottom=2.5cm,left=3cm,right=3cm,marginparwidth=1.75cm]{geometry}
\usepackage[T1]{fontenc}
\usepackage[utf8]{inputenc}
\usepackage{lmodern}
\usepackage{amsmath,amssymb,amsthm,mathtools,mathrsfs}
\usepackage{booktabs,array,enumitem}
\usepackage{microtype,needspace,xurl}
\usepackage[dvipsnames]{xcolor}
\usepackage[colorlinks=true,linkcolor=blue,citecolor=blue,urlcolor=blue]{hyperref}
\usepackage{bookmark}
\newtheorem{theorem}{Theorem}[section]
\newtheorem{proposition}[theorem]{Proposition}
\newtheorem{lemma}[theorem]{Lemma}

\theoremstyle{definition}

\theoremstyle{remark}
\newtheorem{remark}[theorem]{Remark}
\numberwithin{equation}{section}

\newcommand{\xra}{\xrightarrow}
\newcommand{\lra}{\longrightarrow}

\let\emptyset\varnothing

\newcommand{\D}{\mathrm{D}}

\newcommand{\ZZ}{\mathbb{Z}}

\newcommand{\QQ}{\mathbb{Q}}

\newcommand{\CC}{\mathbb{C}}
\newcommand{\PP}{\mathbb{P}}
\newcommand{\FF}{\mathbb{F}}

\newcommand{\RR}{\mathbb{R}}

\newcommand{\ch}{\mathrm{ch}}
\newcommand{\reg}{\mathrm{reg}}

\newcommand{\HH}{\mathbb{H}}

\newcommand{\pr}{\mathrm{pr}}

\newcommand{\Ram}{\mathrm{Ram}}

\renewcommand{\Re}{\operatorname{Re}}
\renewcommand{\Im}{\operatorname{Im}}

\newcommand{\cX}{\mathcal{X}}

\newcommand{\cB}{\mathcal{B}}
\newcommand{\cK}{\mathcal{K}}
\newcommand{\cI}{\mathcal{I}}

\newcommand{\cP}{\mathcal{P}}

\newcommand{\rH}{\mathrm{H}}
\newcommand{\cO}{\mathcal{O}}

\newcommand{\tpi}{\tilde{\pi}}

\DeclareMathOperator{\Def}{Def}
\DeclareMathOperator{\CH}{CH}

\DeclareMathOperator{\tr}{tr}
\DeclareMathOperator{\id}{id}

\newcommand{\Mon}{\operatorname{Mon}}

\setlist[enumerate,1]{label=\textup{(\alph*)},leftmargin=*,itemsep=2pt,beginpenalty=10000}
\setlist[itemize]{leftmargin=*,itemsep=2pt}
\allowdisplaybreaks[1]
\usepackage{tikz-cd}
\usepackage{tikz}
\usetikzlibrary{calc,trees,positioning,arrows,chains,shapes.geometric,%
    decorations.pathreplacing,decorations.pathmorphing,shapes,%
    matrix,shapes.symbols}

\tikzset{
>=stealth',
  punktchain/.style={
    rectangle,
    rounded corners,
    draw=black, thick,
    minimum height=3em,
    text centered,
    on chain},
  line/.style={draw, thick, <-},
  element/.style={
    tape,
    top color=white,
    bottom color=blue!50!black!60!,
    minimum width=8em,
    draw=blue!40!black!90, very thick,
    text width=10em,
    minimum height=3.5em,
    text centered,
    on chain},
  every join/.style={->, thick,shorten >=1pt},
  decoration={brace},
  tuborg/.style={decorate},
  tubnode/.style={midway, right=2pt},
}

\title[Smoothness for Fano orbifolds and Lagrangian bases]{Smoothness criterion for Fano orbifolds \\ and bases of Lagrangian fibrations}

\subjclass[2020]{14J42 (Primary); 14J32, 14J27 (Secondary)}
\keywords{Quotient singularities, Canonical stacks, Hyper-K\"ahler manifolds, Lagrangian fibrations}

\author[Zhiyu Liu]{Zhiyu Liu}
\address{\parbox{0.9\textwidth}{Department of Mathematics, Princeton University\\[1pt] Princeton, NJ 08544, USA
\vspace{1mm}}}
\email{zl3301@princeton.edu}
\urladdr{\url{https://sites.google.com/view/zhiyuliu}}

\begin{document}
\begin{abstract}
We establish a smoothness criterion for Fano varieties with quotient singularities. As an application, we show that if $X\to B$ is a Lagrangian fibration from a compact hyper-K\"ahler manifold and $B$ has quotient singularities, then $B$ is a projective space.
\end{abstract}
\maketitle

\section{Introduction}\label{sec:introduction}

Let $X$ be a normal projective variety with quotient singularities. By \cite[Remark~4.9]{FMN}, there exists a canonical smooth Deligne--Mumford stack $\cX\to X$ whose coarse moduli space is $X$. We denote by $I\cX\coloneqq \cX\times_{\cX\times \cX} \cX$ the inertia stack of $\cX$. The natural morphism $\cX\to I\cX$ is an open and closed embedding, and its image $I\cX^e$ is called the identity component.

Our main result is the following criterion.

\begin{theorem}\label{thm:smooth}
Let $X$ be a Fano variety with quotient singularities. Assume that
\begin{itemize}
    \item there exists a surjective morphism $A\to X$ from an abelian variety $A$, and

    \item for any connected component $\cI\subset I\cX$ with $\cI\neq I\cX^e$, we have $c_{\dim(\cI)}(T_{\cI})=0\in \CH^*(\cI)_{\QQ}$.
\end{itemize}
Then $X$ is a product of projective spaces.
\end{theorem}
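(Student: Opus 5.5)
The plan is to reduce to a known smoothness result: a \emph{smooth} Fano variety that is dominated by an abelian variety is a product of projective spaces. This is the theorem of Occhetta--Wiśniewski and Demailly--Hwang--Peternell, which relies on Lazarsfeld's conjecture proven by Hwang--Mok. So the whole task is to show that $X$ is smooth, i.e.\ that the canonical stack $\cX\to X$ is an isomorphism, equivalently that $I\cX=I\cX^e$. We will get this from a comparison of Euler characteristics.

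\textbf{Step 1: Euler characteristic of $X$.} Let $f\colon A\to X$ be the given surjection. Since $X$ is Fano with quotient singularities, it is rationally connected and has rational singularities. Hence $h^{i}(X,\cO_X)=0$ for $i>0$, and $\chi(X,\cO_X)=1$. We would also like a cohomological input from $A$. Because $A\to X$ is surjective, for a dominant morphism from an abelian variety one expects that $H^*(X,\QQ)$ is generated by classes pulling back into $H^*(A,\QQ)$ compatibly. The more robust route, however, is the orbifold one below.

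\textbf{Step 2: Orbifold Riemann--Roch / Kawasaki.} Apply the Hirzebruch--Riemann--Roch theorem for the smooth DM stack $\cX$ (Toën/Kawasaki) to a suitable bundle, or directly compare the topological Euler characteristic of $X$ with the orbifold Euler characteristic. The key identity is
\[
\chi_{\mathrm{top}}(X)=\sum_{\cI\subset I\cX}\int_{\cI} c_{\dim\cI}(T_{\cI})\cdot(\text{correction})=\int_{\cX}c_{n}(T_\cX)+\sum_{\cI\neq I\cX^e}(\cdots).
\]
This is the orbifold Gauss--Bonnet formula: the topological Euler characteristic of the coarse space equals the sum over components of the inertia stack of the stacky Euler numbers $\int_\cI c_{\dim\cI}(T_\cI)$. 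By the second hypothesis, all twisted terms vanish, so $\chi_{\mathrm{top}}(X)=\int_\cX c_n(T_\cX)$.

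\textbf{Step 3: Use the abelian cover to force étaleness in codimension one and a quasi-étale cover structure.} Consider the Stein factorization and Galois closure. I would try to show that $f$ factors, after replacing $A$ by an isogenous abelian variety, through a quasi-étale map, so that $T_\cX$ pulls back to a bundle built from trivial pieces. The analogue for smooth targets is that the pullback of $T_X$ is nef. Here the nefness of $f^*T_\cX$ (via generic smoothness and the section $T_A\to f^*T_\cX$) gives bounds on Chern numbers. I expect this step to be the main obstacle: to run the DHP/Occhetta--Wiśniewski argument on the stack $\cX$, namely that $T_\cX$ is nef on rational curves and $\cX$ has enough minimal rational curves. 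Concretely, I would follow their proof. Using the vanishing of the twisted Euler numbers, I would show that no rational curve can pass through a stacky point with nontrivial stabilizer in a way compatible with nef $T_\cX$. Equivalently, the twisted sectors would contribute positively (each being itself a Fano-type orbifold dominated by a subvariety of $A$, hence having positive Euler number by induction), contradicting their vanishing unless no twisted sector exists. Thus the induction on dimension, applied to the components $\cI$ (whose coarse spaces are again images of abelian subvarieties and Fano), yields that if $\cI\neq I\cX^e$ exists, then $\int_\cI c_{\dim\cI}(T_\cI)>0$. This contradicts the hypothesis, so $\cX=X$ is smooth and the smooth theorem finishes the proof.
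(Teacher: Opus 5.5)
Your reduction (prove that $\cX\to X$ is an isomorphism, then apply Demailly--Hwang--Peternell and simple connectedness) matches the paper. However, the part that is supposed to prove smoothness has a genuine gap.

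Your Step~2 identity is correct. Orbifold Gauss--Bonnet gives $\chi_{\mathrm{top}}(X)=\sum_{\cI}\int_{\cI}c_{\dim\cI}(T_{\cI})$, so the hypothesis yields $\chi_{\mathrm{top}}(X)=\int_{\cX}c_n(T_{\cX})$. But you never compute either side independently, so this single global number has nothing to bite on.

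Step~3 rests on claims that are false or unsupported.
\begin{itemize}
\item $A\to X$ cannot be made quasi-\'etale, even after isogeny. Since $K_A\sim 0$ and $\cX$ has no stabilizers in codimension one, $\cO_A(m\Ram(f))\cong\pi^*\cO_X(-mK_X)$. So the ramification divisor is nonzero and ample; the paper exploits exactly this.
\item The closing induction has no foundation. The images of the twisted sectors $\cI$ in $X$ are local fixed loci. They are not known to be Fano. Nor are they known to be dominated by abelian varieties: their preimages in $A$ are arbitrary subvarieties, not abelian subvarieties.
\item The inductive hypothesis would also need vanishing of top Chern classes on the components of $I\cI$, and that is not given.
\item ``Fano-type, hence positive Euler number'' fails in general: a smooth quartic threefold has $\chi_{\mathrm{top}}=-56$.
\end{itemize}
Nothing in the proposal turns the vanishing of $c_{\dim\cI}(T_{\cI})$ into the nonexistence of stabilizers.

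The missing idea is to use the vanishing \emph{locally}, on the groupoid $R=A\times_{\cX}A$ with the difference map $\D=t-s\colon R\to A$, for which $\D^{-1}(0)=A\times_{\cX}I\cX$.
\begin{itemize}
\item The excess intersection formula shows that the refined Gysin class $j^![R]$, restricted to $A\times_{\cX}\cI$, equals $q_I^*c_{\dim\cI}(T_{\cI})\cap[A\times_{\cX}\cI]=0$.
\item Local intersection multiplicities at isolated points of $\ell^{-1}(0)$ (after Stein factorization $R\to S\to A$) are positive on any component dominating $A$. Hence every component $R_0$ meeting the twisted part has $\D(R_0)\neq A$.
\item A polarization argument then shows that $\D(R_0)$ is an abelian subvariety, and that $\D(R_0)^\perp$ acts on the normalization of $R_0$.
\item Take a component through $i(Q)$, where $Q$ is a component of the ample ramification divisor surjecting onto $A/\D(R_0)^\perp$. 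Composing it with $R_0$ in the groupoid gives a component meeting the twisted part with strictly larger $\D$-image, contradicting maximality.
\end{itemize}
Hence $\D^{-1}(0)=i(A)$, and flat descent gives $I\cX=I\cX^e$.
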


By \cite{DHP} and simple connectedness of Fano manifolds, it suffices to prove that $X$ is smooth. To show this, we may assume that $A\to X$ is finite and consider the lift $A\to \cX$. We denote by $\D\colon A\times_{\cX} A\to A$ the difference of two projections of $A\times_{\cX} A$. Therefore, the fiber $\D^{-1}(0)=A\times_{\cX}I\cX$ contains the identity part $A\times_{\cX}I\cX^e$, and its complement  $\D^{-1}(0)\setminus A\times_{\cX}I\cX^e$ encodes the nontrivial stabilizers of $\cX$.

It suffices to show $\D^{-1}(0)= A\times_{\cX}I\cX^e$. To this end, we use the excess intersection formula to express the refined intersection with $\D^{-1}(0)$ in terms of the top Chern classes of the inertia components. By the positivity of intersection multiplicities after passing to the Stein factorization of $\D$ and our assumption, we show in Lemma~\ref{lem:proper-differences} that $\D(R_0)\ne A$ for every irreducible component $R_0\subset A\times_{\cX} A$ meeting $\D^{-1}(0)\setminus A\times_{\cX}I\cX^e$.

A further constraint on such a component $R_0$ comes from the action of abelian varieties. In Lemma~\ref{lem:polarized-differences}, we show that for such a component $R_0$, $\D(R_0)$ is an abelian subvariety of $A$. Moreover, its orthogonal complement $\D(R_0)^\perp$ acts on the normalization of $R_0$. Using this action, at the end of Section~\ref{sec:criterion}, we construct a new component $R'$ of $A\times_{\cX} A$ meeting $\D^{-1}(0)\setminus A\times_{\cX}I\cX^e$ with  $\dim(\D(R'))>\dim(\D(R_0))$. As there are only finitely many such $R_0$, we get a contradiction.

\subsection{Bases of Lagrangian fibrations}

Our main application of Theorem~\ref{thm:smooth} concerns the bases of Lagrangian fibrations. 

Let $X\to B$ be a Lagrangian fibration from a compact hyper-K\"ahler manifold. In Proposition~\ref{prop:finite-cover}, we deform $X\to B$ to another Lagrangian fibration $X'\to B$ with a second fibration $X'\to B'$. Therefore, a general fiber of $X'\to B'$ gives a cover of $B$ by an abelian variety. In Section~\ref{sec:smooth}, we further verify the vanishing condition in Theorem~\ref{thm:smooth} for $B$, hence obtain:

\begin{theorem}\label{thm:main}
Let $X\to B$ be a Lagrangian fibration from a compact hyper-K\"ahler manifold of dimension $2n$. If $B$ has quotient singularities, then $B\cong\PP^n$.
\end{theorem}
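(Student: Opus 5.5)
The plan is to reduce Theorem~\ref{thm:main} to Theorem~\ref{thm:smooth} applied to $X=B$. Three things need to be checked. First, $B$ is Fano. Second, $B$ is dominated by an abelian variety. Third, the top Chern classes of the twisted sectors of the canonical stack $\mathcal{B}\to B$ vanish rationally. Theorem~\ref{thm:smooth} then gives that $B$ is a product of projective spaces. Since $\rho(B)=1$, this forces $B\cong\PP^n$; here $\rho(B)=1$ follows from Matsushita's results, because the pullback $\rH^2(B,\QQ)\to\rH^2(X,\QQ)$ has one-dimensional image spanned by the isotropic class of the fibration.

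\emph{Fano property.} By Matsushita, $B$ is a normal projective $\QQ$-factorial variety of dimension $n$ with Picard number one and $-K_B$ ample. For the Fano statement I will simply quote this standard fact. $\QQ$-factoriality is automatic anyway, since $B$ has quotient singularities.

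\emph{Abelian cover.} I use Proposition~\ref{prop:finite-cover}. Fix a base $B$ and deform $X\to B$, keeping $B$ (the base is locally rigid in such deformations), to a Lagrangian fibration $X'\to B$ that admits a second Lagrangian fibration $X'\to B'$. A general fiber $F$ of $X'\to B'$ is an abelian variety of dimension $n$. It is transverse to the first fibration generically, so the composite $F\to X'\to B$ is surjective: its image is a closed subvariety of dimension $n$, because the two fibrations are distinct. This supplies the surjection $A\to B$ required in Theorem~\ref{thm:smooth}.

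\emph{Vanishing of the Chern classes.} This is the main obstacle. Let $\mathcal{I}\neq I\mathcal{B}^e$ be a component. It is a smooth DM stack whose coarse space is (the normalization of) a component of the locus in $B$ fixed by some nontrivial local stabilizer element $g$. My plan is to show that every such fixed locus is itself dominated by an abelian variety, with a dominating map that makes $c_{\dim\mathcal{I}}(T_{\mathcal{I}})$ vanish. Concretely, the preimage of $\mathcal{I}$ in the abelian cover $A\to\mathcal{B}$ is a union of translates of abelian subvarieties. This uses the lift $A\to\mathcal{B}$ and the fact that fixed loci of the induced group action are cosets of abelian subvarieties. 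It gives a finite surjection from an abelian variety (or a disjoint union of such) onto $\mathcal{I}$ that is étale at the generic point of the stack. Tangent bundles of abelian varieties have vanishing Chern classes. Pushing forward, $\deg\cdot c_{\dim\mathcal{I}}(T_{\mathcal{I}})$ equals the pushforward of the top Chern class of the source, up to a correction supported on the ramification. I would control this correction by showing the map is étale in codimension one over the stack. That step relies on the purity of branch locus for smooth DM stacks, together with the fact that the abelian cover can be chosen compatibly with the stacky structure.

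If this étaleness argument fails, the fallback is a deformation argument. Deform $X$ with its Lagrangian fibration over the same base, which keeps $B$ and $\mathcal{I}$ fixed. Then use the family of abelian fibers of varying second fibrations: the top Chern class of $T_{\mathcal{I}}$, computed via a degree-zero top self-intersection of the diagonal class in $\CH_0(\mathcal{I})_{\QQ}$, is represented by a cycle that can be moved off itself using translations by the abelian variety. Either way, verifying this vanishing is where I expect essentially all the difficulty to lie.
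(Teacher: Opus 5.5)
Your reduction is the paper's own: Matsushita for the Fano property and $\rho(B)=1$, Proposition~\ref{prop:finite-cover} for the abelian cover, then Theorem~\ref{thm:smooth}. The gap is the vanishing of $c_{\dim\cI}(T_\cI)$. The mechanism you propose cannot work, because it uses nothing beyond the finite cover $f\colon A\to\cB$, and in that generality the vanishing is false.

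For example, $\PP(1,1,2)\times\PP^1$ is a Fano threefold with quotient singularities and a finite quotient of $E^3$. Take $\PP^1=E/\pm1$, $\PP^2=E^2/\langle(\pm1,\pm1),\mathrm{swap}\rangle$, and $\PP(1,1,2)=\PP^2/\iota$, where $\iota$ is induced by translation by $(p,p)$ with $p\in E[2]\setminus\{0\}$. Its canonical stack has the twisted sector $\cI=B\mu_2\times\PP^1$. The preimage of $\cI$ in $E^3$ is a union of translates of an elliptic curve, which is exactly the picture you aim for. Yet $c_1(T_\cI)\neq0$: the ``correction supported on the ramification'' is the whole class, since $E\to\PP^1$ is branched at four points.

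This is unavoidable in general. As in the proof of Lemma~\ref{lem:ramification-relation}, $\Ram(f)$ is an ample divisor on $A$. So it meets every positive-dimensional component of $A\times_\cB\cI$, and the induced cover of $\cI$ is never \'etale in codimension one. If $\dim\cI=0$, then $c_0(T_\cI)=[\cI]\neq0$ no matter what covers it; this already happens for $\PP(1,1,2)$. There is also no group acting on $A$ whose fixed loci produce the stabilizers of $\cB$; $f$ is merely finite flat. Your fallback has no mechanism either. Deforming $X$ over the fixed base changes neither $\cI$ nor $\CH^*(\cI)$, and the abelian fibres act on neither $B$ nor $\cI$.

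What is missing is that the hyper-K\"ahler geometry must enter through the fibration itself. The paper lifts the fibration to $f\colon X\to\cB$ and applies the twisted Riemann--Roch map $\tau_\cB$ on $I\cB$ to $[Rf_*\cO_X]$.

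\begin{itemize}
\item Since $X$ is a scheme, $If$ lands in $I\cB^e$, so $\tau_\cB([Rf_*\cO_X])|_\cI=0$.
\item On the other hand, $R^if_*\cO_X\cong\Omega^i_\cB$ (Lemma~\ref{lem:1.1}), so $[Rf_*\cO_X]=\lambda_{-1}(\Omega^1_\cB)$.
\item Split $e_0^*\Omega^1_\cB=\Omega^1_\cI\oplus N_{e_0}^\vee$ for the evaluation $e_0\colon\cI\to\cB$. The factor $\ch(t_g(\lambda_{-1}(N_{e_0}^\vee)))$ cancels the denominator of the Lefschetz--Riemann--Roch formula.
\item What remains is $\ch(\lambda_{-1}(\Omega^1_\cI))\,\mathrm{Td}(T_\cI)=c_{\dim\cI}(T_\cI)$, which is therefore zero.
\end{itemize}

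This is Lemma~\ref{lem:nonid}. In particular it rules out zero-dimensional twisted sectors, which no argument based only on the abelian cover can do.
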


When $\dim X=4$, Theorem~\ref{thm:main} is proved in \cite{Huy-Xu} without assuming quotient singularities. When $B$ is smooth, this is the theorem of Hwang \cite{Hwang2008}; see \cite{bakker-schnell,Li-Tosatti} for alternative proofs. In both cases, the method of Theorem~\ref{thm:main} is different from the existing ones.

\subsection*{AI disclosure}
The author's investigation of Theorem~\ref{thm:main} began with \cite[Conjecture~1.4]{liu-liu-xu}. Following the appearance of \cite{xu-muller}, which treats the case of isolated quotient singularities germs, the author realized that the canonical stack of the base could be used to treat quotient singularities uniformly and derive topological obstructions from the global hyper-K\"ahler geometry. The author's initial approach to Theorem~\ref{thm:main} was to pull back the Lagrangian fibration to a non-identity component $\cI$ of the inertia stack and use a stacky version of the group action constructed in \cite{dCKS} to construct a non-compact Lagrangian fibration over $\cI$. The compactification techniques of \cite{sacca,liu-liu-xu} would then produce a Lagrangian fibration over $\cI$. This would impose topological and intersection-theoretic constraints on $\cI$, ultimately ruling out its existence. Although this approach could be carried out, the resulting proof was approximately 50 pages.

While the author was pursuing this approach, OpenAI released a collection of mathematical manuscripts generated by an internal model on October~6, 2026. The manuscript \cite{OAI}, which is approximately 100 pages long, announces a proof of Theorem~\ref{thm:main} without assuming quotient singularities. The author have found
the \cite{OAI}’s exposition difficult to follow and have not yet been able to verify many of its details or assess the correctness of its arguments.

However, on a preliminary reading, its argument consists of two parts: the first establishes that the base has quotient singularities, while the second proves smoothness under this assumption through a detailed study of certain vector fields on stacks and local analytic arguments. After reading the manuscript with the assistance of an LLM, the author found a substantially simpler proof of the second part (i.e.~Theorem~\ref{thm:main}) by combining ideas from \cite{OAI} with the author's earlier approach. More precisely, the author obtained a simple intersection-theoretic proof of the more general Theorem~\ref{thm:smooth} and substantially simplified the proof of Proposition~\ref{prop:finite-cover}. The current paper is written by the author, who takes full responsibility for the paper.

\section{Smoothness criterion}\label{sec:criterion}

Let $X$ be a normal projective variety of dimension $n$ with $-K_X$ $\QQ$-Cartier and ample, i.e.~$X$ is a Fano variety. Throughout this section, we assume that $X$ has quotient singularities and there is a surjection $\pi\colon A\to X$ from an abelian variety $A$. By \cite[Corollary~2.2]{DHP}, after replacing $A$ by its quotient, we may also assume that $\pi$ is finite.

Let $p\colon \cX\to X$ be the canonical stack, which is an isomorphism over $X_{\reg}$ by \cite{FMN}. By the universal property of the canonical stack \cite[Theorem~4.6]{FMN}, we have a lift
\[f\colon A\to\cX\]
with $p\circ f=\pi$. It is clear that $f$ is representable, flat, and finite.

We set $R\coloneqq A\times_{\cX}A$, with projections $s,t\colon R\to A$. We define $\D\coloneqq t-s\colon R\to A$. For the normalization $\nu\colon Z\to R$ of an irreducible component of $R$, we write $\D_Z=\D\circ \nu$ and use $s,t$ also for the induced morphisms on $Z$.

Since both projections $s,t$ of $R$ are finite flat and generically \'etale, we see that $R$ is reduced and pure of dimension $n$, and every component dominates $A$ under both projections. Moreover,
\[\D^{-1}(0)=A\times_{\cX}I\cX.\]
We denote by $i\colon A\to A\times_{\cX}I\cX^e$ the natural inclusion. Hence, $i(A)=A\times_{\cX}I\cX^e$ is the identity part of $\D^{-1}(0)$, which is open and closed.

\begin{lemma}\label{lem:proper-differences}
Assume that $c_{\dim(\cI)}(T_{\cI})=0\in \CH^*(\cI)_{\QQ}$ for any connected component $\cI\subset I\cX$ with $\cI\neq I\cX^e$. If an irreducible component $R_0\subset R$ satisfies $R_0\cap (\D^{-1}(0)\setminus i(A))\neq \emptyset$, then $\D(R_0)\ne A$.
\end{lemma}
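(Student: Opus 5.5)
Suppose, for contradiction, that $\D(R_0)=A$ for some irreducible component $R_0$ meeting $\D^{-1}(0)\setminus i(A)$. Let $\nu\colon Z\to R_0$ be the normalization and $\D_Z\colon Z\to A$ the induced map. Since $\dim Z=n=\dim A$, $\D_Z$ is surjective and generically finite. I would take its Stein factorization $Z\xra{\;g\;} W\xra{\;h\;} A$, with $g$ birational onto the normal variety $W$ and $h$ finite surjective. The plan is to compare two computations of the refined intersection $\D_Z^![0]\in \CH_0(\D_Z^{-1}(0))_\QQ$, the refined Gysin pullback of the class of the point $0\in A$.

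The first computation is positivity. Since $h$ is finite and surjective onto the smooth variety $A$, $h^![0]$ is an effective $0$-cycle supported on all of $h^{-1}(0)$, and each point of $h^{-1}(0)$ has strictly positive multiplicity; this uses normality of $W$, e.g.\ via local degree of a finite map to a smooth germ. Now take a point $z\in Z$ with $\nu(z)\in \D^{-1}(0)\setminus i(A)$, which exists by hypothesis. Since $\D^{-1}(0)\setminus i(A)$ is open and closed in $\D^{-1}(0)$, the connected component $F$ of $\D_Z^{-1}(0)=g^{-1}(h^{-1}(0))$ containing $z$ maps into $\D^{-1}(0)\setminus i(A)$. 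The fibres of $g$ are connected, so $F=g^{-1}(w)$ for $w=g(z)$. Pushing forward along $g$, the part of $\D_Z^![0]$ supported on $F$ has degree equal to the multiplicity of $w$ in $h^![0]$, which is positive. Summing over such $F$, the degree of $\D_Z^![0]$ restricted to the non-identity locus $Z\times_{R}(\D^{-1}(0)\setminus i(A))$ is strictly positive.

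The second computation shows that this degree vanishes. Consider the cartesian square expressing $\D^{-1}(0)=A\times_\cX I\cX$ over the diagonal of $\cX$; equivalently, $\D$ restricted to $R=A\times_\cX A$ is the pullback of the map $\cX\to\cX\times\cX$ along $A\times A\to \cX\times\cX$, composed with subtraction. More concretely, $0\in A$ is a regular embedding of codimension $n$, and over a non-identity inertia component $\cI$ the fibre $\D^{-1}(0)$ restricted to the $\cI$-part is $A\times_\cX\cI$, which is smooth of dimension $\dim\cI$ because $f$ is flat. The excess intersection formula then gives, on that part, $\D^![0]=c_{\dim\cI}(E)\cap[A\times_\cX\cI]$. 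Here $E$ is the excess bundle, namely $T_A|$ modulo the image of the normal data, which I expect to identify with the pullback of $T_\cI$ up to trivial factors coming from $T_A$, since $T_A$ is trivial. The top Chern class therefore equals the pullback of $c_{\dim\cI}(T_\cI)$, which vanishes by assumption, so $\D^![0]$ restricted to the non-identity locus is $0$ in $\CH_0$ rationally.

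This is incompatible with the positivity statement once we note that $\D^![0]$ on $R$ decomposes over components: pushing forward from the normalizations of the components of $R$ gives $\D^![0]$, with multiplicities from the reducedness of $R$. The hardest step, which I would check first, is making the excess bundle identification precise. I would use the local model in which $\cX$ near a point is $[U/G]$, so that $A\times_\cX A$ is locally the union of graphs of $g\in G$ and the fixed locus $U^g$ gives $\cI$; the excess bundle is then $T_U|_{U^g}/T_{U^g}\oplus$ something, up to sign. I would then have to reconcile this with $c_{\dim\cI}(T_\cI)$, possibly through a Chern class identity that uses the triviality of $T_A$ and the étaleness of $A\to\cX$ generically. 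A second, smaller point: positivity on each component could in principle cancel against the other components of $R$, but it does not, because every contribution is nonnegative and the contribution from $R_0$ is positive.
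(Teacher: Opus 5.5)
Your overall strategy matches the paper's: positivity of intersection multiplicities after a Stein factorization, played against vanishing of the refined class on the non-identity locus. Your positivity computation for $R_0$ is fine. The gap is in the vanishing half.

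You apply the excess intersection formula directly to the fibre square of $j\colon\{0\}\hookrightarrow A$ along $\D\colon R\to A$. That formula needs the non-identity part of $\D^{-1}(0)$ to be regularly embedded in $R$ with a computable normal bundle, and this is not available. $R$ is locally a union of graphs of stabilizer elements only where $f$ is \'etale, whereas $f$ ramifies along the ample divisor $\Ram(f)$. So $R$ is in general singular along $A\times_{\cX}\cI$.

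Your claim that $A\times_{\cX}\cI$ is smooth also fails. Flatness of $f$ only makes $A\times_{\cX}\cI\to\cI$ finite flat, and over points of $e(\cI)$ where $f$ ramifies this fibre product can be singular or even non-reduced. So neither the local $[U/G]$ model nor a Chern class identity using $T_A\cong\cO_A^{n}$ closes the step.

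The missing idea is to never compute the excess on $R$ at all.
\begin{itemize}
\item Since $R=(A\times A)\times_{\cX\times\cX}\cX$ and $\Delta_A$ is the pullback of $j$ under subtraction $A\times A\to A$, commutativity of refined Gysin maps gives $j^![R]=\Delta_{\cX}^!\bigl(\Delta_A^![A\times A]\bigr)=\Delta_{\cX}^![A]$.
\item Flatness of $f$ gives $\Delta_{\cX}^![A]=q_I^*\bigl(\Delta_{\cX}^![\cX]\bigr)$, with $q_I\colon A\times_{\cX}I\cX\to I\cX$.
\item The excess formula is then used only for $\Delta_{\cX}$ against itself, on the smooth stack $I\cX$ with $e\colon\cI\to\cX$ unramified. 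There the excess bundle is $e^*T_{\cX}/N_e\cong T_{\cI}$.
\item Flat pullback yields $c_{\dim\cI}(q_I^*T_{\cI})\cap[A\times_{\cX}\cI]=0$, with no regularity of $A\times_{\cX}\cI$ or of $R$ needed.
\end{itemize}

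A smaller gap is your assertion that every component of $R$ contributes a nonnegative degree on the non-identity locus. For a component $R_1$ with $\D(R_1)\neq A$, the fibre over $0$ is positive-dimensional, and the local contribution is an excess class with no a priori sign. It is in fact zero on each connected component of the fibre. To see this, Stein factorize and observe that the pushforward of $[R_1]$ to the middle term vanishes, since its image has dimension $<n$. Then use that $j^!$ commutes with proper pushforward.

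The paper builds this in by Stein factorizing $\D$ on all of $R$ at once, $R\xra{h}S\xra{\ell}A$. Only components with $n$-dimensional image survive in $h_*[R]=\sum c_i[V_i]$, all with $c_i>0$. Fulton's positivity for the proper intersections $V_i\times_A\{0\}$ then gives the contradiction at the point $\xi$ lying under a connected component $W_0$ of $\D^{-1}(0)\setminus i(A)$ meeting $R_0$.
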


\begin{proof}
Let $j\colon\{0\}\hookrightarrow A$ and let
\[q_I\colon\D^{-1}(0)=A\times_{\cX}I\cX \lra I\cX\]
be the projection, which is finite flat as a base change
of $f$. The diagonal morphisms $\Delta_A$ and
$\Delta_{\cX}$ are representable regular unramified morphisms. Therefore, the flat base change and commutativity of refined Gysin maps \cite[Section~6.2 and Theorem~6.4]{Fulton1998},
\cite[Sections~3.1 and~3.3]{Kresch1999} give
\[\begin{aligned}
j^!([R])
&=(\Delta_A^!\circ\Delta_{\cX}^!)([A\times A])
=(\Delta_{\cX}^!\circ\Delta_A^!)([A\times A])\\
&=\Delta_{\cX}^!([A])
=q_I^*\bigl(\Delta_{\cX}^!([\cX])\bigr).
\end{aligned}\]
Note that $\Delta_A$ is the pullback of $j$ under the difference morphism $A\times A\to A$, $(a,b)\mapsto b-a$.

Let $\cI\subset I\cX\setminus I\cX^e$ be a connected component, with evaluation $e\colon\cI\to\cX$ and tautological automorphism $g$. Thus, we have a decomposition
\[e^*(T_{\cX})\cong T_{\cI}\oplus N_e\]
where $N_e$ is the normal bundle of $e$. Thus, the excess intersection formula
\cite[Section~6.3]{Fulton1998} and our assumption give
\begin{equation}\label{eq:zero-gysin}
\left.j^!([R])\right|_{A\times_{\cX}\cI}
=c_{\dim(\cI)}(q_I^*(T_{\cI}))\cap[A\times_{\cX}\cI]=0.
\end{equation}
Now, we choose a connected component $W_0$ of
$\D^{-1}(0)\setminus i(A)$ meeting $R_0$.
It lies in $A\times_{\cX}\cI$ for some component $\cI\subset I\cX\setminus I\cX^e$, so
\eqref{eq:zero-gysin} gives $\left.j^!([R])\right|_{W_0}=0$.

Let
\[R\xra{\,h\,}S\xra{\,\ell\,}A \]
be the Stein factorization of $\D\colon R\to A$. Hence, $h$ has connected geometric fibers and $\ell$ is finite. Moreover, $W_0$ is the inverse image of a $0$-dimensional component of $\ell^{-1}(0)$ supported at some point $\xi\in S$.

Suppose for a contradiction that $\D(R_0)=A$. Then $h(R_0)$ is an $n$-dimensional component of $S$ containing $\xi$, with positive coefficient in the effective cycle $h_*([R])$. Write $h_*([R])=\sum_i c_i[V_i]$, where $c_i>0$ and $\dim(V_i)=n$. Since $\ell$ is finite, $V_i\times_A\{0\}$ is zero-dimensional. As $j$ is a regular embedding of codimension $n$, \cite[Proposition~7.1(a)]{Fulton1998} gives
\[0=\operatorname{coeff}_{[\xi]}\bigl((j^!\circ h_*)([R])\bigr)=\sum_{\xi\in V_i}c_i\operatorname{coeff}_{[\xi]}\bigl(j^!([V_i])\bigr)>0,\]
which is a contradiction.
\end{proof}

Fix an ample line bundle $H$ on $X$ and set $L\coloneqq \pi^*H$. Thus, $s^*L\cong t^*L$. Let $\lambda_L\colon A\to\widehat A$ be the homomorphism given by $a\mapsto t_a^*L\otimes L^{-1}$. For an abelian subvariety $D\subset A$, we set
\[D^\perp\coloneqq \ker\bigl(A\xra{\lambda_L}\widehat A\lra\widehat D\bigr)^0.\]
In particular, we have an isogeny $D\times D^\perp\to A$ and a diagram 
\[\begin{tikzcd}
	& {D^{\perp}} \\
	D & A \\
	{\widehat{D}} & {\widehat{A}}
	\arrow[hook, from=1-2, to=2-2]
	\arrow[hook, from=2-1, to=2-2]
	\arrow["{\lambda_L}", two heads, from=2-2, to=3-2]
	\arrow[two heads, from=3-1, to=2-1]
	\arrow[two heads, from=3-2, to=3-1]
\end{tikzcd}\]
Moreover, by Poincar\'e complete reducibility, we have
\[\dim(D^\perp)=n-\dim(D)\]
and $(D^\perp)^\perp=D$.

\begin{lemma}\label{lem:polarized-differences}
Let $Z$ be the normalization of an irreducible component of $R$ with $0\in\D_Z(Z)\subsetneq A$. 

\begin{enumerate}
    \item $\D_Z(Z)$ is an abelian subvariety of $A$.

    \item There is a $\D_Z(Z)^{\perp}$-action on $Z$ with $s(kz)=k+s(z)$, $t(kz)=k+t(z)$, and $\D_Z(kz)=\D_Z(z)$ for any $k\in \D_Z(Z)^{\perp}$ and $z\in Z$.
\end{enumerate}
\end{lemma}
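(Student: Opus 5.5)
Throughout, write $D\coloneqq\D_Z(Z)$; the plan is to exploit the relation $s^*L\cong t^*L$ on $Z$ together with the group structure of $A$.

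\textbf{Step 1 (structure of $D$).} The key input is that $s^*L\cong t^*L$, since both are pullbacks of $H$ along $\pi\circ s=\pi\circ t$. I would compare $\D_Z$ with the family of translates. Consider the morphism
\[\Phi\colon Z\to A\times A,\qquad z\mapsto (s(z),t(z)).\]
It is finite, because $s$ is finite, and its image $\Phi(Z)$ is an $n$-dimensional subvariety $\Gamma\subset A\times A$ dominating both factors. The line bundle $M\coloneqq\pr_2^*L\otimes\pr_1^*L^{-1}$ restricts trivially to $Z$, hence numerically trivially to $\Gamma$.

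I would then use the theorem of the square and seesaw. On $A\times A$ we have $M\cong (\text{difference})^*L\otimes(\text{terms involving }\lambda_L)$; more precisely, the Mumford bundle gives $m^*L\cong\pr_1^*L\otimes\pr_2^*L\otimes(1\times\lambda_L)^*\mathcal P$. Changing coordinates to $(a,d)=(s,t-s)$ yields
\[M\cong d^*L\otimes(1\times\lambda_L)^*\mathcal P\big|_{(a,d)}.\]
Restricting this to curves in $\Gamma$ shows that on $\Gamma$ the class of $d^*L$ equals minus a "mixed" class. A cleaner approach is to use Abhyankar/Ueno-type structure: the image of a variety in an abelian variety is fibered by the translates of its stabilizer. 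So I would prove that $D$ has the property that $L|_{\text{fibres}}$ forces $D$ to be a translate of an abelian subvariety, and since $0\in D$, $D$ is then an abelian subvariety.

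Concretely, I would argue as follows. The numerical triviality of $M|_\Gamma$ implies that for any curve $C\subset Z$ one has $t_*C\cdot L=s_*C\cdot L$. Take a fibre $F$ of $\D_Z$ over a general point $d\in D$. On $F$ we have $t=s+d$, so $s(F)$ is a subvariety of $A$. Dimension counting gives $\dim F=n-\dim D$, and $\dim s(F)=\dim F$ since $s$ is finite. Now let $G\subset A$ be the stabilizer of the family $\{s(F_d)\}$. I would aim to show that $s(F_d)$ is a translate of $D^\perp$. Intersection numbers give $s(F)\cdot L^{k}\cdot(\text{pullback along }\lambda_L\text{ paired with }d)$, and the vanishing of the mixed term (from $M|_\Gamma\equiv 0$, computed on $F\times$ variation in $d$) says that $\lambda_L(s(F))$ pairs trivially with the tangent directions of $D$. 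Hence the differences $s(F)-s(F)$ lie in $\ker(A\to\widehat D)$, so $s(F)$ lies in a translate of $D'^\perp$ with $D'$ the abelian subvariety generated by $D$. Comparing dimensions, $\dim D'\le n-\dim F=\dim D$, which forces $D=D'$ and $s(F)$ to be a translate of $D^\perp$. This gives (a).

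\textbf{Step 2 (the action).} With $K\coloneqq D^\perp$, the set $\Gamma\subset A\times A$ is a union of translates of the diagonally embedded $K$, by Step 1 applied to every general fibre and then by closedness. So $\Gamma$ is stable under $(a,b)\mapsto(a+k,b+k)$. I then need to lift this to $Z$. Since $R=A\times_\cX A$ and the diagonal translation does not a priori preserve $R$, I would instead define the action on the normalization using connectedness of $K$. For each $k\in K$, the translate of $Z$ under the diagonal translation is a component of $(t_k\times t_k)^{-1}(\Gamma)$, and it is the normalization of $\Gamma$'s cover. Because $Z\to\Gamma$ is finite and $K$ is connected, the normalized pullback of $Z$ along the action map $K\times\Gamma\to\Gamma$ has a component that restricts to the identity at $k=0$. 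That component defines the action, and the formulas for $s$, $t$, and $\D_Z$ follow.

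\textbf{Main obstacle.} The hardest step is the last one, lifting the action from $\Gamma$ to $Z$. For this I would use that $Z$ is a component of the finite étale-over-generic-point correspondence, so the family of translates gives a finite cover of $K\times Z$. Normality together with the rigidity of abelian varieties, via the group law extending over normal bases, should then produce a genuine action.
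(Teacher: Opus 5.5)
Your outline has the same shape as the paper's proof: take fibres $F$ of $\D_Z$, show $s(F)$ is a translate of $D^\perp$, deduce that $\Gamma$ is invariant under diagonal translation, and lift to $Z$. However, neither load-bearing step is actually proved.

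\textbf{Step 1.} Numerical information gives nothing on the fibres. On $F=\D_Z^{-1}(d)$ you have $t=t_d\circ s$, so $M|_F\cong(s|_F)^*\lambda_L(d)$ with $\lambda_L(d)\in\mathrm{Pic}^0(A)$. This is numerically trivial regardless of anything, and your ``vanishing of the mixed term'' is never computed. The input you need is the genuine isomorphism $s^*L\cong t^*L$, which gives $(s|_F)^*\lambda_L(d)\cong\cO_F$. You then need an Albanese argument. Let $K_F$ be the abelian subvariety generated by differences of points of $s(F)$. The Albanese map of a resolution of $F$ maps onto $K_F$, so $\widehat{K_F}\to\mathrm{Pic}^0$ of that resolution has finite kernel, and hence $\lambda_L(d)|_{K_F}$ is torsion.

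Note that this concerns only the single point $d$ under $F$, not ``the tangent directions of $D$''. Since $K_F$ may vary with $d$, passing to all of $D$ needs a countability argument, as in the paper. There are only countably many abelian subvarieties $K\subset A$. For each, $\ker([m]\circ\psi_K)$ with $\psi_K(a)=\lambda_L(a)|_K$ is a finite union of translates of $K^\perp$. So for very general $a\in D$ you get $D\subset b+K_a^\perp$, and then $\dim K_a^\perp\le r=\dim D$ together with $0\in D$ gives $D=K_a^\perp$. Your closing dimension count is fine, but the claim $s(F)-s(F)\subset\ker(A\to\widehat D)$ is asserted, not derived.

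\textbf{Step 2.} You correctly locate the difficulty, but connectedness of $K=D^\perp$ and ``rigidity'' do not produce a morphism $K\times Z\to Z$. The component of $(K\times Z)\times_\Gamma Z$ through $\{0\}\times\Delta_Z$ defines an action only if it is birational over $K\times Z$. If $Z\to\Gamma$ had degree $>1$, nothing would stop sheets from being permuted as $k$ moves. For instance, a double cover of $\Gamma$ branched along a non-invariant divisor admits no lifted action.

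The missing observation is that $(s,t)\colon Z\to\Gamma$ is birational. Every component of $R$ dominates $A$, and $(s,t)\colon R\to A\times A$ is a closed immersion over $X_{\reg}$. Hence $Z$ is the normalization of $\Gamma$. Since $K\times Z$ is normal and $(k,z)\mapsto(k+s(z),k+t(z))$ maps it onto $\Gamma$, the universal property of normalization gives a unique lift $K\times Z\to Z$. Uniqueness then yields the action axioms and the stated identities for $s$, $t$ and $\D_Z$.
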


\begin{proof}
We set $r=\dim(\D_Z(Z))$. For any point $a\in\D_Z(Z)$, let $Z_a$ be an irreducible component of $\D_Z^{-1}(a)$. Then we have $\dim(Z_a)\geq n-r$, $t|_{Z_a}=t_a\circ s|_{Z_a}$, and $(s|_{Z_a})^*(\lambda_L(a))\cong\cO_{Z_a}$.

Let $K_a\subset A$ be the abelian subvariety generated by differences of points of $s(Z_a)$. Since $s$ is finite,
we have $\dim(K_a)\ge n-r$. The Albanese homomorphism of a resolution of $Z_a$ maps onto $K_a$, so its dual has finite kernel. Consequently $\lambda_L(a)|_{K_a}$ is torsion.

For an abelian subvariety $K\subset A$ with $\dim(K)\ge n-r$, consider the homomorphism
\[\psi_K\colon A\lra\widehat K,\qquad a\longmapsto\lambda_L(a)|_K.\]
By definition, $(\ker(\psi_K))^0=K^\perp$. For every positive integer $m$, the finiteness of $\widehat K[m]$ gives
\[\bigl(\ker([m]\circ\psi_K)\bigr)^0=K^\perp.\]
Thus, $\ker([m]\circ\psi_K)$ is a union of translations $b+K^\perp$ for finitely many $b\in A$.

For any point $a\in\D_Z(Z)$, the preceding argument gives an abelian subvariety $K_a$ with $\dim(K_a)\ge n-r$ such that $\lambda_L(a)|_{K_a}$ is torsion. Hence, $a\in\ker([m]\circ\psi_{K_a})$ for some positive integer $m$. Since $A$ has only countably many abelian subvarieties, it follows that $\D_Z(Z)$ is covered by $b+\bigl(\ker([m]\circ\psi_{K_a})\bigr)^0=b+K_a^{\perp}$ for countably many $a,b$, and $m$. Now, choose a very general point $a\in\D_Z(Z)$ outside every proper intersection with these countably many $b+K_a^{\perp}$. Hence, we have $a\in b+K_a^\perp$ and $\dim(K_a^\perp)\le r$. By our choice of $a$, we also have $\D_Z(Z)\subset b+K_a^\perp$. Since $b+K_a^{\perp}$ is irreducible, $r=\dim(\D_Z(Z))$, and $0\in\D_Z(Z)$, we obtain
\[\D_Z(Z)=b+K_a^\perp=K_a^\perp.\]
This proves (a). Moreover, for every irreducible component of $\D_Z^{-1}(a)$ for a very general point $a\in \D_Z(Z)$, we have $K_a=\D_Z(Z)^\perp$.

To prove (b), we choose a very general point $a\in\D_Z(Z)$. Let $Z_a$ be any irreducible component of the reduced fiber $\D_Z^{-1}(a)$, and choose $z_a\in Z_a$. By the preceding argument, $K_a=\D_Z(Z)^\perp$, so $s(Z_a)\subset s(z_a)+\D_Z(Z)^\perp$. Since $s$ is finite, we have
\[n-r\le\dim(Z_a)=\dim(s(Z_a))\le\dim(\D_Z(Z)^\perp)=n-r.\]
Thus $s(Z_a)=s(z_a)+\D_Z(Z)^\perp$. Since $t|_{Z_a}=t_a\circ s|_{Z_a}$, we obtain
\[(s,t)(Z_a)=\{(s(z_a)+k,t(z_a)+k)\mid k\in\D_Z(Z)^\perp\}.\]
As $a$ varies over these very general points $a\in\D_Z(Z)$, the corresponding fibers have dense union in $Z$. Therefore, taking closures shows that $\Gamma\coloneqq (s,t)(Z)\subset A\times A$ is preserved by diagonal $\D_Z(Z)^\perp$-translation.

The morphism $(s,t)\colon Z\to\Gamma$ is finite because $s$ is finite. Over $X_{\reg}$, the morphism $(s,t)\colon R\to A\times A$ is a closed immersion. Hence $(s,t)\colon Z\to\Gamma$ is birational, and $Z$ is the normalization of $\Gamma$. Since $\D_Z(Z)^\perp$ is smooth and $Z$ is normal, $\D_Z(Z)^\perp\times Z$ is normal. The universal property of normalization \cite[Tag~035Q]{StacksProject} then gives a unique lift of the morphism
\[\D_Z(Z)^\perp\times Z\lra\Gamma,\qquad (k,z)\longmapsto(k+s(z),k+t(z))\]
to $Z$. Write this lift as $(k,z)\mapsto kz$. Hence, the uniqueness shows that this is a $\D_Z(Z)^\perp$-action on $Z$. By construction, we see that $s(kz)=k+s(z)$, $t(kz)=k+t(z)$, and $\D_Z(kz)=\D_Z(z)$, which proves (b).
\end{proof}

\begin{lemma}\label{lem:ramification-relation}
Let $D\subsetneq A$ be an abelian subvariety. Then there are a prime ramification divisor $Q\subset A$ of $f$ and an irreducible component $R'\ne i(A)$ of $R$ such that

\begin{enumerate}
    \item we have a surjection $Q\twoheadrightarrow A/D^\perp$, and 

    \item $i(Q)\subset R'$ and $\D(R')\not\subset D$.
\end{enumerate}
\end{lemma}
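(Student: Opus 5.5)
The plan is to take $Q$ among the components of the ramification divisor of $f$, then produce $R'$ from the local branches of $R$ that cross the diagonal along $Q$, and finally check $\D(R')\not\subset D$ by a first-order computation at a general point of $Q$. That last check is where I expect the real difficulty.

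\emph{Step 1: choosing $Q$.} Since $f\colon A\to\cX$ is a finite flat morphism of smooth Deligne--Mumford stacks, its ramification divisor $\Ram_f$ satisfies $K_A=f^*K_{\cX}+\Ram_f$. The canonical stack has no stacky structure in codimension one, so $K_{\cX}=p^*K_X$. Since $K_A=0$, this gives $\Ram_f\sim_{\QQ}-\pi^*K_X$, which is an ample effective divisor because $\pi$ is finite and $X$ is Fano.

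Let $Q\subset A$ be a prime divisor that does not surject onto $A/D^\perp$. Then $Q$ lies in the preimage of a proper closed subset of $A/D^\perp$, and since $\dim Q=n-1$, $Q$ equals the preimage of a prime divisor of $A/D^\perp$. Such a $Q$ is invariant under $D^\perp$, which is nontrivial because $D\ne A$, so $Q\cdot C=0$ for curves $C$ in $D^\perp$-cosets. If every component of $\Ram_f$ were of this form, $\Ram_f$ would fail to be ample. Hence some prime component $Q$ of $\Ram_f$ surjects onto $A/D^\perp$, giving (a). I will need to keep track of \emph{all} such components, since the choice among them may matter in Step 3.

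\emph{Step 2: producing $R'$.} At a general point $q\in Q$, the image $\pi(q)$ is a general point of the branch divisor, so it lies in $X_{\reg}$ and $\cX=X$ there. Analytically locally, $f$ is a cyclic cover $x_1\mapsto x_1^e$ with $e\ge2$, where $Q=\{x_1=0\}$; equivalently, there is a local pseudo-reflection $g_q$ fixing $Q$ pointwise. Near $(q,q)$, the fiber product $R=A\times_X A$ is then given by $x_1^e=y_1^e$ together with $x_i=y_i$ for $i\ge2$. It therefore contains the branches $y_1=\zeta x_1$ for $e$-th roots of unity $\zeta\ne1$, and each of these is distinct from the diagonal $i(A)$ and contains $i(Q)$. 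Letting $R'$ be the global irreducible component containing one such branch gives $R'\ne i(A)$ and $i(Q)\subset R'$.

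\emph{Step 3: showing $\D(R')\not\subset D$ (main obstacle).} Suppose instead that $\D(R')\subset D$. Differentiating $\D=t-s$ along the branch $y=g_q(x)$ at $(q,q)$ shows that $\D$ sends this branch, to first order, onto the line spanned by the eigenvector $v_q\in\operatorname{Lie}(A)$ of $g_q$ with eigenvalue $\zeta$. This vector is transverse to $T_qQ$. The assumption therefore forces $v_q\in\operatorname{Lie}(D)$ for general $q\in Q$, and I must show this contradicts the surjection $Q\twoheadrightarrow A/D^\perp$.

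My first attempt will use the Hermitian form of the polarization $L$ on $\operatorname{Lie}(A)$. I will try to show that $g_q$ is unitary for it, perhaps using $s^*L\cong t^*L$ on $R'$, so that $T_qQ=v_q^{\perp}$. Since $\operatorname{Lie}(D^\perp)$ is the orthogonal complement of $\operatorname{Lie}(D)$ for this form, $v_q\in\operatorname{Lie}(D)$ would give $\operatorname{Lie}(D^\perp)\subset T_qQ$ for general $q$. Then $Q$ would be $D^\perp$-invariant, contradicting Step 1.

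If unitarity is not available, I will instead use the freedom in Step 1. I would argue that if every component $Q$ of $\Ram_f$ surjecting onto $A/D^\perp$ had all its reflection directions in $\operatorname{Lie}(D)$, then $\Ram_f$ would have intersection number zero against curves in $D^\perp$, contradicting ampleness. For this I would compare $\Ram_f$ restricted to $D^\perp$-cosets with the relative canonical class of the induced map on those cosets.
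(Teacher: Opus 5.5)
\paragraph{Where the proposal breaks.} Steps~1 and~2 are fine. Step~1 is the paper's ampleness argument for (a). Your local branch $y=g_q(x)$ is a local-analytic version of the paper's argument that $f$ would otherwise be unramified at the generic point of $Q$. The first-order computation in Step~3 is also correct: $\D(R')\subset D$ forces $v_q\in\operatorname{Lie}(D)$. The gap is in turning this into a contradiction, and neither of your two routes works.

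\emph{Unitarity.} Unitarity of $dg_q$ for the Hermitian form $H$ of $L$ is false in general. So it cannot follow from $s^*L\cong t^*L$, which holds on every component of $R$. Take $A=E\times E$, a general degree-$3$ cover $\phi\colon E\to\PP^1$, and $\pi\colon A\to\PP^2=\operatorname{Sym}^2\PP^1$. Then $L=\phi^*\cO(1)\boxtimes\phi^*\cO(1)$, so $H$ is a multiple of the standard form in flat coordinates. The map $\pi$ ramifies along $C=\overline{\{(x,y)\mid x\ne y,\ \phi(x)=\phi(y)\}}$. There the local deck transformation is $(x,y)\mapsto(\psi(y),\psi^{-1}(x))$, with derivative $\left(\begin{smallmatrix}0&c\\ c^{-1}&0\end{smallmatrix}\right)$, where $c=\phi'(y)/\phi'(x)$. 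As $y$ approaches a simple ramification point of $\phi$, $c\to0$. Hence the reflection direction $(c,-1)$ is not $H$-orthogonal to $T_qC=\CC(c,1)$ at general $q\in C$.

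\emph{Intersection numbers.} Your fallback is circular. For a curve $\Gamma$ in a $D^\perp$-coset $P$, $\Ram(f)\cdot\Gamma$ depends only on the divisor. Every component surjecting onto $A/D^\perp$ meets a general $P$ in a nonzero effective divisor, whatever its reflection directions. So reaching $\Ram(f)\cdot\Gamma=0$ would require already knowing that no component with reflection directions in $\operatorname{Lie}(D)$ surjects onto $A/D^\perp$, which is exactly the claim at issue. Riemann--Hurwitz for $P\to\pi(P)$ only says that the normalized image has numerically trivial canonical class, and that does not conflict with $-K_X$ being ample. More basically, first-order data along $i(Q)$ is much weaker than the global hypothesis $\D(R')\subset D$, and the global hypothesis is what must be used.

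\paragraph{The missing ingredient} is Lemma~\ref{lem:polarized-differences}.
\begin{itemize}
\item Since $i(Q)\subset R'$, you have $0\in\D(R')\subset D\subsetneq A$. So for the normalization $\nu\colon Z\to R'$, the connected group $D^\perp\subset\D(R')^\perp$ acts on $Z$ by diagonal translation and preserves $\D_Z$.
\item Hence $\nu^{-1}(i(A))$, which is open and closed in $\D_Z^{-1}(0)$, is $D^\perp$-invariant.
\item Therefore $s(\nu^{-1}(i(A)))$ is a $D^\perp$-invariant closed subset containing $Q$, so it contains $Q+D^\perp=A$ by (a).
\item But it is a proper subset, because $R'\ne i(A)$ and $s$ is finite. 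This is the contradiction.
\end{itemize}
In your language, the action gives $g_q(x+k)=g_q(x)+k$ for small $k\in D^\perp$. Hence $\operatorname{Lie}(D^\perp)\subset\ker(dg_q-1)=T_qQ$, so $Q$ is $D^\perp$-invariant, and unitarity is never needed. The polarization enters through the very-general-fiber argument of that lemma, not through any pointwise property of $dg_q$.
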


\begin{proof}
Let $\Ram(f)\subset A$ be the ramification divisor of $f$. Since $\omega_A\cong\cO_A$ and $\cX$ has no stabilizer in codimension $1$, for sufficiently divisible $m>0$, we have
\[\cO_A(m\Ram(f))\cong\pi^*\cO_X(-mK_X).\]
In particular, it is ample. Since $\dim(D^\perp)>0$, every $D^\perp$-coset meets $\Ram(f)$. Hence, some irreducible component $Q$ of $\Ram(f)$ maps onto $A/D^\perp$. This proves (a).

For part (b), note that the generic point of $Q$ lies over $X_{\reg}$. If $\Delta_{A/\cX}(A)=i(A)$ is the only irreducible component of $R$ through the generic point of $\Delta_{A/\cX}(Q)=i(Q)$, we see that $\Delta_{A/\cX}$ is an open immersion near the generic point of $i(Q)$. Therefore, \cite[Tag~02GE]{StacksProject} implies that $f$ is unramified at the generic point of $Q$, a contradiction. Thus there is an irreducible component $R'\ne i(A)$ with $i(Q)\subset R'$.

Now, suppose for a contradiction that $\D(R')\subset D$, and let $\nu\colon Z\to R'$ be the normalization. By Lemma~\ref{lem:polarized-differences}, $\D(R')$ is an abelian subvariety and $D^\perp\subset\D(R')^\perp$ acts on $Z$ by diagonal translation. Since $\nu^{-1}(i(A))$ is open and closed in $\D_Z^{-1}(0)$ and $D^\perp$ is connected, it is $D^\perp$-invariant. Thus $s(\nu^{-1}(i(A)))$ is a $D^\perp$-invariant closed proper subset of $A$ containing $Q$. Thus, we obtain
\[A=D^\perp+Q\subset s(\nu^{-1}(i(A)))\subsetneq A,\]
a contradiction.
\end{proof}

\begin{proof}[Proof of Theorem~\ref{thm:smooth}]
Suppose that $\D^{-1}(0)\ne i(A)$. We choose the normalization $\nu\colon Z\to R$ of an irreducible component meeting $\D^{-1}(0)\setminus i(A)$ for which $\dim(\D_Z(Z))$ is maximal. By Lemmas~\ref{lem:proper-differences} and~\ref{lem:polarized-differences}, $\D_Z(Z)\subset A$ is a proper abelian subvariety, and $\D_Z(Z)^\perp$ acts on $Z$. Since $\D_Z(Z)^\perp$ is connected, its action preserves $\nu^{-1}(\D^{-1}(0)\setminus i(A))$.

We fix $Q,R'$ as in Lemma~\ref{lem:ramification-relation}, applied to $\D_Z(Z)$. Since $Q+\D_Z(Z)^\perp=A$, the orbit of any point in $\nu^{-1}(\D^{-1}(0)\setminus i(A))$ meets $s^{-1}(Q)$. Let $E$ be an irreducible component of $s^{-1}(Q)$ meeting this set. Since $Q$ is Cartier and $s$ is finite, we have $\dim(E)=n-1$ and $s(E)=Q$. As the image $\D_Z(Z)^\perp\cdot E$ is closed and irreducible, and its image under $s$ is $A$, we get $\D_Z(Z)^\perp\cdot E=Z$ and $\D_Z(E)=\D_Z(Z)$.

Let $T$ be an irreducible component of $R'\times_{t,A,s}Z$ containing the image of
\[E\lra R'\times_{t,A,s}Z,\qquad z\longmapsto(i(s(z)),z).\]
Note that $R'\times_{t,A,s}Z$ is the fiber of the morphism $R'\times Z\to A$ given by $(r',z)\mapsto t(r')-s(z)$ over $0$. Since $\dim(R'\times Z)=2n$ and $\dim(A)=n$, we get $\dim(T)\ge n$. Both projections are finite, so $\dim(T)=n$ and $T$ maps onto both $R'$ and $Z$.

We define a morphism $\mu\colon T\to R$ as the composition
\[T\xra{(\pr_1|_T,\nu\circ \pr_2|_T)}R\times_{t,A,s}R\to R,\]
where $\pr_1\colon R'\times_{t,A,s}Z\to R'$ and $\pr_2\colon R'\times_{t,A,s}Z\to Z$ are projections, and $R\times_{t,A,s}R\to R$ is the projection to the first and third factors, under the canonical identification $$R\times_{t,A,s}R= A\times_{\cX}A\times_{\cX}A.$$ Therefore, we have 
\begin{equation}\label{eq1}
s\circ \mu=s\circ \pr_1|_T
\end{equation}
and
\begin{equation}\label{eq2}
\D\circ \mu=\D\circ \pr_1|_T+\D_Z\circ \pr_2|_T.
\end{equation}
By \eqref{eq1}, we know that $\mu$ is a morphism between schemes finite over $A$, so $\mu$ is finite. Hence, $\mu(T)$ is an irreducible component of $R$. For $z\in E$, composition with the identity gives $\mu(i(s(z)),z)=\nu(z)$. Since $E$ meets $\nu^{-1}(\D^{-1}(0)\setminus i(A))$ and $\D_Z(E)=\D_Z(Z)$, we obtain $$\mu(T)\cap(\D^{-1}(0)\setminus i(A))\ne\emptyset$$ and $\D_Z(Z)\subset\D(\mu(T))$. Now, by \eqref{eq2}, the surjectivity of $T\to R'$, and $\D(R')\not\subset\D_Z(Z)$, we have $\D_Z(Z)\subsetneq\D(\mu(T))$. As both images are closed and irreducible, $\dim(\D(\mu(T)))>\dim(\D_Z(Z))$, contradicts the maximality.

Thus, $\D^{-1}(0)=i(A)$ and faithfully flat descent along $f\colon A\to\cX$ yields $I\cX=I\cX^e$. Therefore, all stabilizers of $\cX$ are trivial, so its coarse moduli morphism $p$ is an isomorphism. Since $\cX$ is smooth, $X$ is smooth. Now, the result follows from \cite[Theorem~1.1]{DHP} and simple connectedness of $X$.
\end{proof}

\section{Finite covers by abelian varieties}\label{sec:fixed-base}

We say a complex manifold $X$ is a compact hyper-K\"ahler manifold if $X$ is a simply connected compact K\"ahler manifold with $\rH^{2,0}(X)$ spanned by a single holomorphic symplectic form. In this section, we construct finite cover of bases of Lagrangian fibrations on $X$ from abelian varieties.

\begin{proposition}\label{prop:finite-cover}
Let $\pi\colon X\to B$ be a Lagrangian fibration of a compact hyper-K\"ahler manifold $X$ of dimension $2n$. Then there exists a finite surjective morphism $A\twoheadrightarrow B$ from an abelian variety $A$.
\end{proposition}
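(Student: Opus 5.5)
The plan is to deform $\pi\colon X\to B$, keeping the base $B$ fixed, to a Lagrangian fibration $X'\to B$ that admits a second Lagrangian fibration $X'\to B'$. A general fiber $A$ of the second fibration is then a Lagrangian abelian variety mapping to $B$, and we will show this map is finite and surjective.

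\textbf{Step 1: the deformation.} Let $\beta=\pi^*H$ for an ample $H$ on $B$, so $q(\beta)=0$. By Matsushita's results the Lagrangian fibration deforms over the locus $\Def(X,\beta)$ where $\beta$ stays of type $(1,1)$. In the period domain this is a hyperplane section $\beta^\perp$, and the base $B$ stays constant along it. Inside $\beta^\perp$ we choose a very general point $X'$ with Picard lattice $U=\langle\beta,\gamma\rangle$, a hyperbolic plane, where $\gamma$ is another isotropic class.

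This is possible for the following reason. The lattice $H^2(X,\ZZ)$ has signature $(3,b_2-3)$. Consider the condition $\gamma\in\beta^\perp$-twisted, namely choosing $\gamma$ with $q(\gamma)=0$ and $q(\beta,\gamma)>0$. Such classes exist, since $\beta^\perp/\beta$ is indefinite of rank $b_2-2\ge 1$ when $b_2\ge 3$; alternatively one can work rationally. The Noether--Lefschetz locus where both $\beta$ and $\gamma$ are $(1,1)$ then has codimension $2$ and is nonempty by surjectivity of the period map.

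After a monodromy and Weyl-chamber reflection argument (Markman), and using that $\beta$ is nef and remains nef on $X'$, we may arrange that $\gamma$ or some Hodge isometric image of it is nef, possibly after changing the birational model. Since the Picard rank is $2$ and $q(\gamma)=0$, the SYZ conjecture for isotropic nef classes would be needed here. Rather than rely on that, we will use Matsushita's deformation argument: on the boundary ray of the nef cone a nef isotropic class $\gamma$ is semiample, because its restriction comes from a deformation of the known fibration $\pi$. Concretely, $(X',\gamma)$ is deformation equivalent to $(X,\beta)$ through parallel transport, since all primitive isotropic classes in $U$ with the same divisibility are in one monodromy orbit. By Matsushita, $\gamma$ is then induced by a Lagrangian fibration $X'\to B'$. \textbf{This step is the main obstacle.} It requires care about the birational model: the second fibration may live on a birational model $X''$ of $X'$, and we then need the first fibration to persist on $X''$, which holds since $\beta$ stays nef in Picard rank $2$ with both boundary rays isotropic.

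\textbf{Step 2: finiteness.} Let $A$ be a smooth fiber of $X'\to B'$; it is an abelian variety of dimension $n$. Suppose a curve $C\subset A$ were contracted by $A\to B$. Then $\beta\cdot C=0$ and also $\gamma\cdot C=0$, so $C$ is contracted by both fibrations. The class $\beta+\gamma$ is big, since $q(\beta+\gamma)=2q(\beta,\gamma)>0$, and nef, so $(\beta+\gamma)\cdot C=0$ forces $C$ to lie in the null locus of $\beta+\gamma$. A general fiber $A$ avoids this proper closed subset. Indeed $(\beta+\gamma)|_A=\beta|_A$, and by Fujiki's relation $\int_A\beta^n=c\,q(\beta,\gamma)^n>0$, so $\beta|_A$ is big. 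A big and nef class on an abelian variety is ample, since nef line bundles on abelian varieties have the form $\pi^*$(ample) from a quotient, and bigness forces the quotient to be trivial. Hence $A\to B$ contracts no curve and is finite. Since $\dim A=\dim B$, it is surjective.
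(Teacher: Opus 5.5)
Your Step~2 is sound once both fibrations live on the same manifold. Fujiki's relation gives $\int_A\beta^n>0$ for a smooth fibre $A$ of the second fibration, so $\beta|_A$ is nef and big, hence ample on the abelian variety $A$, and $A\to B$ is finite. This is a fine substitute for the paper's observation that $(\pi_w,g)^*(L\boxtimes H')$ is the K\"ahler class $e-\gamma^m(e)$.

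The gap is in Step~1, at the point you flag yourself, and your proposed way around it fails. Two inputs are needed, and neither is established.

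\emph{(i) Semiampleness of $\gamma$.} For an arbitrary deformation type, a nef isotropic class is only known to be semiample when the pair is deformation equivalent to a known Lagrangian fibration. In practice $\gamma$ must lie in the $\Mon^2(X)$-orbit of $\beta$, and in the right component of the period domain of pairs (cf.\ \cite[Theorem~3.7(v)]{SV25}). Your claim that primitive isotropic classes of equal divisibility form one monodromy orbit is false. For $K3^{[n]}$-type, $\Mon^2$ acts on the discriminant group $\ZZ/(2n-2)$ only by $\pm1$. Take $n=26$, a hyperbolic pair $e_1,f_1$, and $q(\delta)=-50$. Then the isotropic classes $5e_1+5f_1+\delta$ and $5e_1+20f_1+2\delta$ both have divisibility $5$, but their images in $\ZZ/50$ are $\pm10\neq\pm20$. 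In any case, nothing ties your abstractly chosen $\gamma$ to $\beta$.

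\emph{(ii) Simultaneous nefness.} MBM classes in $\langle\beta,\gamma\rangle$ can prevent $\gamma$ from being nef on any birational model. Flopping walls between $\beta$ and $\gamma$ can instead make them nef only on different models. So your assertion that $\beta$ stays nef on $X''$ is unjustified, and Step~2 genuinely needs both morphisms on one variety. This already happens for a K3 surface with $\mathrm{NS}=\langle\beta,\gamma\rangle\cong U$, which is your unimodular case. There $s=\gamma-\beta$ is a $(-2)$-curve with $\gamma\cdot s=-1$, so $\gamma=R_s(\beta)$ is never nef, and a Weyl-chamber reflection just returns you to $\beta$. This is exactly the situation in which (i) does hold, via $R_s$, so the two requirements pull against each other.

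There is also a smaller issue: Lemma~\ref{lem:fixed-base} is only local. The Noether--Lefschetz locus of $\gamma$ must therefore pass near the period of $X$, which an abstract choice of $\gamma$ does not ensure.

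The paper's construction is designed to supply exactly (i) and (ii). Instead of a Picard-rank-two point, it works in the rank-four space $U=(\mathrm{P}_X\cap\Lambda_\QQ)\oplus\langle e,h\rangle_\QQ$. It builds the second class as $-\gamma^m(e)$, where $\gamma\in\Mon^2(X)$ is a loxodromic element made from two Eichler transvections $T_{e,x},T_{b,y}$; these are monodromy operators by \cite[Lemma~7.5]{Markman2011}. The element is conjugated so that its isotropic eigenlines lie near $[w_*]$. This keeps $X_w$ inside the local fixed-base family and makes (i) automatic, with the component of the period domain matched by passing to $\overline X$.

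For (ii), the paper uses that non-realizable points of $\HH(U)$ are rational while the axis of $\gamma$ is irrational. A limiting argument (Lemma~\ref{lem:kahler-interval}) then shows that the whole geodesic $\ell(e,\gamma^m(e))$ avoids non-realizable points for $m\gg0$. By \cite[Theorem~2.10]{AV15}, the entire open segment between $e$ and $-\gamma^m(e)$ is therefore K\"ahler on the single manifold $X_w$. Your proposal has no substitute for either of these two constructions.
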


In the following, we fix a very ample line bundle $L$ on $B$ and set $e\coloneqq c_1(\pi^*L)\in \rH^2(X,\ZZ)$. We denote the lattice $\rH^2(X,\ZZ)$ with the Bogomolov--Beauville--Fujiki form $q$ by $(\Lambda,q)$. Then the period domain is
\[\Omega_\Lambda\coloneqq \{[\sigma]\in\PP(\Lambda_\CC)\mid q(\sigma)=0,\ q(\sigma,\bar\sigma)>0\}.\]
We denote by $\cK_X$ the K\"ahler cone of $X$.

Let $\cP\colon\Def(X)\to\Omega_\Lambda$ be the local period map. We set $$\Def(X,e)\coloneqq \cP^{-1}(\Omega_\Lambda\cap\PP((e^\perp)_\CC)).$$
By \cite[Theorem~1.1]{MatsushitaDeformations}, this is the local deformation space of the pair $(X,\pi^*L)$.

\begin{lemma}[{\cite[Lemma 3.7]{syz-moduli-k3}}]\label{lem:fixed-base}
There exists an open subset $D\subset\Def(X,e)$ with the Kuranishi family $\mathscr X\to D$ and a morphism $\tpi\colon \mathscr X\to \mathscr B$ over $D$, such that $\tpi_t$ is a Lagrangian fibration over $B$ for each $t\in D$ and $\tpi_0=\pi$ for a point $0\in D$.
\end{lemma}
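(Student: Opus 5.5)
The plan is to deform the pair $(X,\pi^*L)$ inside $\Def(X,e)$ so that the base stays fixed while the fibration is perturbed, and to use the infinitesimal rigidity of $B$. First I would set up the relative Kuranishi family. By \cite[Theorem~1.1]{MatsushitaDeformations}, over $\Def(X,e)$ the line bundle $\pi^*L$ deforms to a family $\mathscr L$ on the universal family. Relative base-point-freeness follows from semicontinuity together with the vanishing $\rH^i(X_t,\mathscr L_t^{m})=0$ for $i>0$. This vanishing holds by Matsushita's results on higher direct images $R^i\pi_*\cO_X\cong\Omega^i_B$ and Kollár's vanishing, since $\mathscr L_t$ is nef and of numerical dimension $n$. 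As a consequence, $h^0(X_t,\mathscr L_t^m)$ is locally constant, and the relative section ring defines a morphism $\tpi\colon\mathscr X\to\mathscr B$ over a neighbourhood $D$ of $0$.

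Next I would identify the fibres $\mathscr B_t$ with $B$. Each $\mathscr B_t=\operatorname{Proj}\bigoplus_m\rH^0(X_t,\mathscr L_t^m)$ is a flat deformation of $B=\operatorname{Proj}\bigoplus_m\rH^0(B,L^m)$. So it suffices to show that $B$ admits no nontrivial deformations compatible with this construction, or more concretely that the graded algebra $\bigoplus_m\rH^0(X_t,\mathscr L_t^m)$ is locally trivial in $t$. I would try to deduce this from the fact that $\pi_*\cO_X=\cO_B$ and $R^1\pi_*\cO_X\cong\Omega^1_B$. These should force the Kodaira–Spencer class of $\mathscr B\to D$ to vanish: the deformations of $X$ preserving $e$ map to $\rH^1(X,T_X)\cong\rH^1(X,\Omega^1_X)$, and the composition to $\rH^1(B,T_B)$ should factor through classes killed by the isotropy of $e$.

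The main obstacle is exactly this rigidity step. I would first try to show the Kodaira–Spencer map $T_0D\to\operatorname{Ext}^1(\Omega_B,\cO_B)$ is zero by an explicit computation with the symplectic form. I would then conclude local triviality by shrinking $D$ and using that a flat family with vanishing Kodaira–Spencer map over a reduced base is locally trivial, once the fibres are known to have quotient or otherwise mild singularities. If this is too delicate in the singular setting, the fallback is to invoke the cited \cite[Lemma 3.7]{syz-moduli-k3}, which is precisely the statement that $\tpi_t$ is a Lagrangian fibration over the fixed $B$. Finally, the Lagrangian property of $\tpi_t$ for $t\in D$ holds because being Lagrangian is automatic for any fibration with positive-dimensional fibres on a hyper-K\"ahler manifold, by Matsushita.
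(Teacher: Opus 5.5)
The paper does not prove this lemma itself; it is quoted from \cite[Lemma~3.7]{syz-moduli-k3}, on top of Matsushita's deformation theorem \cite{MatsushitaDeformations}. Judged as a standalone argument, your proposal has genuine gaps.

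First, the vanishing $\rH^i(X_t,\mathscr L_t^m)=0$ for $i>0$, which you use for relative base-point-freeness, is false already at $t=0$. Koll\'ar's vanishing kills $\rH^j(B,R^i\pi_*\cO_X\otimes L^m)$ for $j>0$ (recall $\omega_X\cong\cO_X$), not $\rH^i(X,\pi^*L^m)$. Combined with Koll\'ar's splitting of $R\pi_*\cO_X$, it gives $\rH^i(X,\pi^*L^m)\cong\rH^0(B,R^i\pi_*\cO_X\otimes L^m)$. Since $R^i\pi_*\cO_X$ is torsion-free of rank $\binom{n}{i}$, this is nonzero for $1\le i\le n$ and $m\gg0$; for an elliptic K3, $h^1(X,\pi^*\cO_{\PP^1}(m))=m-1$. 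Kawamata--Viehweg does not apply either, since $\mathscr L_t$ is not big. So local constancy of $h^0(X_t,\mathscr L_t^m)$ and the existence of $\tpi$ do not follow from semicontinuity. This is exactly the nontrivial content of Matsushita's theorem, which you should cite instead.

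Second, and more seriously, nothing in the proposal forces $\mathscr B_t\cong B$. The Kodaira--Spencer step is only asserted (``should force'', ``should factor''), and the mechanism you suggest cannot work as stated. Take $\theta\in T_0\Def(X,e)$. The obstruction to extending $\pi$ along $\theta$ with the target $B$ held fixed lies in
$\mathrm{Ext}^1_X(L\pi^*L_B,\cO_X)\cong\mathrm{Ext}^1_B(L_B,\cO_B)\oplus\mathrm{Hom}(\Omega_B,R^1\pi_*\cO_X)$,
by adjunction plus Koll\'ar's splitting, where $L_B$ is the cotangent complex.

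Let $\alpha\in\rH^1(X,\Omega_X)$ correspond to $\theta$ via $\sigma$. The second component is $\alpha|_F$ on smooth fibres $F$, and $q(\alpha,e)=0$ is precisely its vanishing, i.e.\ the condition for the fibres to deform. The isotropy of $e$ gives no control over the first component, which is the Kodaira--Spencer class of $\mathscr B\to D$ itself. For smooth $B$ that class lives in $\rH^1(B,T_B)$, whose vanishing you only know through Hwang's theorem $B\cong\PP^n$. For singular $B$, $\mathrm{Ext}^1(L_B,\cO_B)$ also contains local deformations of the singularities, and the lemma is stated, and used in Proposition~\ref{prop:finite-cover}, with no hypothesis on $B$.

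Moreover, ``Kodaira--Spencer map zero everywhere implies local triviality'' is not a standard fact for singular fibres, and you invoke it only under a mild-singularity assumption you do not have. A genuine rigidity input for the base is missing, and falling back on \cite[Lemma~3.7]{syz-moduli-k3} is circular. Your final step (Lagrangian-ness via Matsushita) is fine.
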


By Lemma~\ref{lem:fixed-base} and the local Torelli theorem (cf.~\cite[Theorem~4.1]{Huybrechts2012}), we may assume that $X$ is projective. Let $h\in \rH^2(X,\ZZ)$ be an ample class. Then $q(e,h)>0$. We may also assume that $$\mathrm{P}_X\coloneqq \langle \Re(\sigma), \Im(\sigma) \rangle_{\RR}\subset \Lambda_{\RR}$$ is rational, where $\sigma$ is a holomorphic 2-form on $X$. Note that $\mathrm{P}_X$ is independent of the choice of $\sigma$. Let $\mathfrak M$ be the connected component of the marked moduli space that contains $(X,\id)$.

For a field $\FF$ and an $\FF $-vector space $V$, we denote by $\langle S\rangle_{\FF}$ the $\FF$-subspace of $V$ spanned by a subset $S\subset V$. Set $U\coloneqq (\mathrm{P}_X\cap\Lambda_{\QQ})\oplus\langle e,h\rangle_{\QQ}$ and
\[\HH(U)\coloneqq \{[v]\in \PP(U_{\RR})\mid q(v)<0\}.\]
In particular, $q|_U$ has signature $(3,1)$. We use the orientation of $\mathrm{P}_X\oplus\RR h$ determined by $(\Re(\sigma),\Im(\sigma),h)$. Therefore, it induces an orientation for any positive 3-plane of $\Lambda_{\RR}$.

A positive 3-plane is \emph{realizable} if it is $\langle \mathrm{P}_Y,\kappa \rangle_{\RR}$ for a holomorphic 2-form $\sigma$ and a K\"ahler class $\kappa$ on $Y$, where $Y$ is in the fixed component $\mathfrak M$. By \cite[Theorem~4.9]{AV15}, these are precisely the
3-planes orthogonal to no monodromy birationally minimal (MBM) class, and they form an open set in the space of 3-planes.

For any $[v]\in \PP(U_{\RR})$, we define a 3-plane $W_{[v]}\coloneqq v^\perp\cap U_\RR$. Let $\HH(U)_{\mathrm{nr}}\subset\HH(U)$ be the locus consists of $[v]$ with $W_{[v]}$ non-realizable. Hence, it is a closed subset. Since $\mathrm{P}_X\oplus\RR h$ is realizable, no MBM class lies in $U^\perp$. For $[v]\in \HH(U)_{\mathrm{nr}}$, an orthogonal MBM class $\delta$ therefore satisfies
\[0\ne\pr_U(\delta)\in U_\QQ\cap\RR v,\]
where $\pr_U\colon \Lambda_{\RR}\to U_{\RR}$ is the orthogonal projection. Hence
\begin{equation}\label{eq:rational-obstructions}
 \HH(U)_{\mathrm{nr}}\subset\HH(U)\cap \PP(U_{\QQ}).
\end{equation}
Let $\Mon^2(X)\subset O^+(\Lambda,q)$ be the degree-two monodromy group \cite[Definition~1.1]{Markman2011}. Thus, every element in $\Mon^2(X)$ that preserves $U$ also preserves $\HH(U)_{\mathrm{nr}}$.

\subsection{A parallel-transport}
Let $U^-$ be the connected component of 
\[\{v\in U_{\RR}\mid q(v)<0\}\]
whose boundary contains $\RR_{>0}e$. We set
\[\partial\HH(U)\coloneqq \{[v]\in\PP(U_\RR)\mid q(v)=0\}\]
and
\[\partial U^-\coloneqq \{v\in \overline{U^-}\setminus \{0\}\mid q(v)=0\}.\]
Define $\ell(v,w)\coloneqq\{[sv+tw]\mid s,t>0\}\subset \HH(U)$ for any non-proportional $v,w\in \partial U^-$. We set
\[w_*\coloneqq e-\frac{2q(e,h)}{q(h)}h\in \partial U^-,\]
hence $[w_*]\in \partial \HH(U)$.

\begin{lemma}\label{lem:irrational-axis}
For every neighborhood $V$ of $[w_*]$ in $\partial \HH(U) $, there exists $\gamma\in\Mon^2(X)$ with
\begin{itemize}
    \item $\gamma(U)=U$ and $\gamma|_{U^\perp}=\id$,

    \item there exists $\rho>1$ such that $\gamma (v_\pm)=\rho^{\pm1}v_\pm$ with $v_{\pm}\in \partial U^-$ and $[v_\pm]\in V$, and

    \item $\langle v_-,v_+\rangle_\RR\cap U_\QQ=0$.
\end{itemize}
\end{lemma}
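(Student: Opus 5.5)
The plan is to realise $\gamma$ inside an arithmetic subgroup of $O(U)$ acting on hyperbolic $3$-space $\HH(U)$. We produce one "generic" loxodromic element with irrational axis, and then conjugate it so that its fixed points land in $V$.

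\emph{Step 1 (an arithmetic lattice).} Put $U_\ZZ\coloneqq U_\QQ\cap\Lambda$ and $U^\perp_\ZZ\coloneqq U^\perp\cap\Lambda$. Then $U_\ZZ\oplus U^\perp_\ZZ\subset\Lambda$ has finite index. Consider the elements of $O(U_\ZZ)$ acting trivially on the discriminant group $U_\ZZ^\vee/U_\ZZ$. Each such element extends by $\id$ on $U^\perp$ to an isometry of $\Lambda$, because it preserves every overlattice of $U_\ZZ\oplus U^\perp_\ZZ$ defined by gluing along discriminant groups. These elements form a congruence subgroup $\Gamma_1\subset O(U_\ZZ)$. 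By Markman, $\Mon^2(X)$ has finite index in $O(\Lambda)$. Hence
\[\Gamma\coloneqq\{\gamma\in\Gamma_1\mid \gamma\oplus\id_{U^\perp}\in\Mon^2(X),\ \gamma(U^-)=U^-\}\]
has finite index in $O(U_\ZZ)$. Since $q|_U$ has signature $(3,1)$, Borel--Harish-Chandra shows that $\Gamma$ is a lattice in $O^+(U_\RR)\simeq\mathrm{Isom}(\HH^3)$. In particular $\Gamma$ is Zariski dense in $SO(U)$, its limit set is all of $\partial\HH(U)$, and attracting fixed points of loxodromic elements of $\Gamma$ are dense in $\partial\HH(U)$. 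Every $\gamma\in\Gamma$ satisfies the first bullet by construction.

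\emph{Step 2 (a generic loxodromic element).} We want $\gamma_0\in\Gamma$ whose characteristic polynomial on $U_\QQ$ is irreducible over $\QQ$ of degree $4$ and which has a real eigenvalue $\rho>1$. Its eigenvectors for $\rho^{\pm1}$ are then isotropic and can be chosen in $\partial U^-$, since $\gamma_0$ preserves $U^-$.

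For such $\gamma_0$, the third bullet is automatic. The subspace $\langle v_-,v_+\rangle_\RR\cap U_\QQ$ is $\gamma_0$-invariant and rational. Irreducibility of the characteristic polynomial forbids nonzero proper rational invariant subspaces, and this intersection cannot be all of $U_\QQ$.

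To produce $\gamma_0$, I would invoke the Prasad--Rapinchuk theorem: a Zariski-dense subgroup of the $\QQ$-group $SO(U)$ contains $\RR$-regular elements generating a maximal $\QQ$-torus that is "generic", i.e.\ whose characteristic polynomial has Galois group as large as possible. An $\RR$-regular element of $SO(3,1)$ is loxodromic. If one wants to avoid that citation, one can argue by hand instead. A loxodromic element with reducible characteristic polynomial has a rational invariant plane, and the isometry it induces on the definite complementary plane has finite order dividing $12$. So one would choose two independent loxodromic elements and check that products $\gamma_1^a\gamma_2^b$ eventually have traces incompatible with such a factorisation. \textbf{This step is the main obstacle}, and I would try the Prasad--Rapinchuk route first.

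\emph{Step 3 (moving the fixed points into $V$).} Pick a loxodromic $\gamma_1\in\Gamma$ whose attracting fixed point $a$ lies in $V$ and whose repelling fixed point differs from $[v_\pm]$. Such $\gamma_1$ exists by density of attracting fixed points, possibly after conjugating once. By North--South dynamics, $\gamma_1^m[v_\pm]\to a$ as $m\to\infty$, and the two images stay distinct.

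Set $\gamma\coloneqq\gamma_1^m\gamma_0\gamma_1^{-m}$ for $m\gg0$. Its $\rho^{\pm1}$-eigenvectors are $\gamma_1^m v_\pm\in\partial U^-$, and their classes lie in $V$. Its characteristic polynomial equals that of $\gamma_0$, so it is still irreducible, and the third bullet persists. Finally, $\gamma\in\Gamma$, so it extends by $\id_{U^\perp}$ to an element of $\Mon^2(X)$, which gives the first bullet.
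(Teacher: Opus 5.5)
\textbf{There is a genuine gap in Step~2.} Step~2 is the real content of the lemma, and you never actually produce an element of $\Gamma$ whose axis $\langle v_-,v_+\rangle_\RR$ contains no nonzero rational vector.

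The Prasad--Rapinchuk theorem does not apply directly here. Its generic-torus form is usually stated for absolutely almost simple groups. But $SO(U)$, for $U$ of signature $(3,1)$, is of type $A_1\times A_1$. It is isogenous to $\mathrm{Res}_{F/\QQ}$ of a form of $SL_2$ over the imaginary quadratic field $F=\QQ(\sqrt{\operatorname{disc}U})$.

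Applying the theorem to the $F$-group only gives genericity over $F$, and that is not enough. Take a hyperbolic element of $\Gamma$ that fixes a rational positive vector $u\in U_\QQ$; such elements exist because $\Gamma\cap O(u^\perp)$ is a lattice in $O(2,1)$.
\begin{itemize}
\item Its $SL_2$-eigenvalues are real, hence not in $F$, so it is generic over $F$.
\item Yet it has eigenvalue $1$ on $U$.
\item Its axis plane is the orthogonal complement of a rational plane, so the third bullet fails.
\end{itemize}
What you need is that Galois acts transitively on all four eigenvalues $\rho^{\pm1},e^{\pm i\theta}$ on $U_\QQ$, that is, $\QQ$-irreducibility of the centralizing $\QQ$-torus. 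You have not checked that any available version of the theorem gives this.

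Your by-hand alternative is only an outline. Nothing in it shows that some $\gamma_1^a\gamma_2^b$ has $\rho+\rho^{-1}\notin\QQ$.

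Steps~1 and~3 are fine. In Step~3, choose the attracting point in the interior of $V$ and different from $[v_\pm]$. Discreteness then already forces the repelling point to differ from $[v_\pm]$, since two loxodromics in a discrete group that share one fixed point share both.

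\textbf{The missing idea is to build $\gamma_0$ explicitly from unipotents,} which is what the paper does.
\begin{itemize}
\item Take $b\in\QQ w_*$ with $q(e,b)=1$.
\item Take orthogonal $x,y$ in the positive definite plane $\langle e,b\rangle^\perp_\QQ\cap U_\QQ$.
\item The transvections $T_{e,x}$ and $T_{b,y}$ are the identity on $U^\perp$.
\item After rescaling $x,y$, they lie in $\Mon^2(X)$, because $\Mon^2(X)$ has finite index in $O(\Lambda)$.
\end{itemize}
For $\gamma_0=T_{e,x}\circ T_{b,y}$ one computes
\[
\chi_{\gamma_0|_U}(t)=t^4-(4+k)t^3+(6-2k)t^2-(4+k)t+1=(t^2-z_+t+1)(t^2-z_-t+1),
\]
with $k=q(x)q(y)/4\in\ZZ$ and $z_\pm=\bigl(k+4\pm\sqrt{k^2+16k}\bigr)/2$.

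For $k\ge25$, the integer $k^2+16k$ lies strictly between $(k+7)^2$ and $(k+8)^2$. Hence $z_+=\rho+\rho^{-1}\notin\QQ$. Any $v\in\langle v_-,v_+\rangle_\RR\cap U_\QQ$ then satisfies $(\gamma_0^2+\id)v=z_+\gamma_0v$. Both $(\gamma_0^2+\id)v$ and $\gamma_0v$ are rational while $z_+$ is irrational, so $\gamma_0v=0$ and hence $v=0$.

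This construction also replaces your density argument in Step~3. The transvection $T_{b,y}$ is parabolic with unique boundary fixed point $[b]=[w_*]$. So conjugating $\gamma_0$ by $T_{b,y}^N$ for $N\gg0$ moves $[v_\pm]$ into $V$.
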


\begin{proof}
We choose $b\in\QQ w_*$ with $q(e,b)=1$ and nonzero vectors $x,y\in\langle e,b\rangle^\perp_{\QQ}\cap U_\QQ$ with $q(x,y)=0$. For any 
$a\in\{e,b\}$ and $z\in\langle e,b\rangle^\perp_{\QQ}\cap U_\QQ$, we define an operator $T_{a,z}\colon \Lambda_{\QQ}\to \Lambda_{\QQ}$ by
\begin{equation}\label{eq:null-rotation}
 T_{a,z}(v)\coloneqq v+q(v,a)z-q(v,z)a-\frac{q(z)}2q(v,a)a.
\end{equation}
Thus, we have $T_{a,z}\in O^+(\Lambda_\QQ,q)$, $T_{a,z}|_{U^\perp}=\id$, and $T_{a,z}^N=T_{a,Nz}$. By \cite[Lemma~7.5]{Markman2011}, we may rescale $x,y$ so that
\[T_{e,x},T_{b,y}\in\Mon^2(X)\]
and $k\coloneqq\frac{q(x)q(y)}4\ge25$.

Write $\gamma_0=T_{e,x}\circ T_{b,y}$. Then the characteristic polynomial $\chi_{\gamma_0|_U}(t)$ of $\gamma_0|_U\colon U\to U$ is
\[\chi_{\gamma_0|_U}(t)=t^4-(4+k)t^3+(6-2k)t^2-(4+k)t+1.\]
Moreover, $k=\tr(\gamma_0|_U)-4\in\ZZ$. Set
\[z_\pm=\frac{k+4\pm\sqrt{k^2+16k}}2.\]
Since $k\ge25$, we have $(k+7)^2<k^2+16k<(k+8)^2$, so $z_+\notin\QQ$. Moreover, $z_+>2$ and $-2<z_-<2$, and
\[\chi_{\gamma_0|_U}(t)=(t^2-z_+t+1)(t^2-z_-t+1).\]
Hence the real eigenvalues of $\gamma_0|_U$ are $\rho,\rho^{-1}$, where $\rho=(z_++\sqrt{z_+^2-4})/2>1$. Their eigenvectors $v_+$ and $v_-$ satisfy $q(v_\pm)=q(\gamma_0(v_\pm))=\rho^{\pm2}q(v_\pm)$. Thus $q(v_\pm)=0$, and we may take $v_\pm\in\partial U^-$.

Note that for $v\in\langle v_-,v_+\rangle_\RR\cap U_\QQ$, we have $(\gamma_0^2+\id)(v)=z_+\gamma_0(v)$. Since both $\gamma_0^2(v)+v$ and $\gamma_0(v)$ lie in $U_\QQ$ and $z_+\notin\QQ$, we obtain $\gamma_0(v)=0$. Hence, $\langle v_-,v_+\rangle_\RR\cap U_\QQ=0$.

Note that for every nonzero vector $v\in U_{\RR}\setminus \RR b$ with $q(v)=0$, \eqref{eq:null-rotation} gives
\[\lim_{N\to \infty}[T_{b,y}^N(v)]=[b]=[w_*].\]
Thus, we may fix a sufficiently large $N$ and set
\[\gamma\coloneqq T_{b,y}^N\circ\gamma_0\circ T_{b,y}^{-N}\in\Mon^2(X).\]
After replacing $v_\pm$ by eigenvectors $T_{b,y}^N(v_\pm)$ of $\gamma|_U$, we get the result.
\end{proof}

In particular, \eqref{eq:rational-obstructions} gives
\begin{equation}\label{eq:three-geodesics}
\HH(U)_{\mathrm{nr}}\cap\bigl(\ell(v_-,v_+)\cup\ell(e,v_+)\cup\ell(e,v_-)\bigr)=\emptyset.
\end{equation}

\subsection{An additional fibration}

For $[w]$ near $[w_*]$ in $\partial \HH(U)$, we take the representative $w\in\partial U^-$ with $q(e,w)=-1$, and set $\alpha(w)\coloneqq -q(h,w)$, $h_w\coloneqq \alpha(w)e-q(e,h) w$, and
\[\mathrm{P}(e,w)\coloneqq \langle e,w\rangle^\perp_{\RR}\cap U_\RR.\]
At $[w]=[w_*]$, we have $\mathrm{P}(e,w)=\mathrm{P}_X$ and $h_w=h$. Note that $\mathrm{P}(e,w)$ has an orientation induced by $\mathrm{P}_X$. Therefore, Local Torelli \cite[Theorem~4.1]{Huybrechts2012} and Lemma~\ref{lem:fixed-base} give a Lagrangian fibration $\pi_w\colon X_w\to B$ with $e\in\overline{\cK}_{X_w}$ and $h_w\in\cK_{X_w}$. For $t\ge0$, the 3-plane
\[W_{[(\alpha(w)+t)e+q(e,h) w]}=\mathrm{P}(e,w)\oplus\RR(h_w+te)\]
is therefore realizable.

Now, we shrink to a neighborhood $V$ of $[w_*]$ and choose $M>0$ such that $0<\alpha(w)\le q(e,h) M$ for any $[w]\in V$ with $q(e,w)=-1$.
Therefore, for such $[w]\in V$ and $s\ge M$, we take $t=q(e,h) s-\alpha(w)\ge0$ and get
\begin{equation}\label{eq:cusp-neighborhood}
[se+w]\notin \HH(U)_{\mathrm{nr}}.
\end{equation}

We fix $\gamma$ as in Lemma~\ref{lem:irrational-axis}, with $[v_\pm]\in V$.

\begin{lemma}\label{lem:kahler-interval}
For $m\gg0$, $\ell(e,\gamma^m(e))\cap \HH(U)_{\mathrm{nr}}=\emptyset$.
\end{lemma}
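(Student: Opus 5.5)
The idea is to cut the geodesic $\ell(e,\gamma^m(e))$ into pieces and treat each piece with one of three inputs. The inputs are the cusp estimate \eqref{eq:cusp-neighborhood} at $[e]$, the fact \eqref{eq:three-geodesics} that the ideal triangle with vertices $[e],[v_-],[v_+]$ avoids $\HH(U)_{\mathrm{nr}}$, and the $\gamma$-invariance of $\HH(U)_{\mathrm{nr}}$. This invariance holds because $\gamma\in\Mon^2(X)$ preserves $U$, as noted before Lemma~\ref{lem:irrational-axis}. We view $\HH(U)$ as hyperbolic $3$-space, using that $q|_U$ has signature $(3,1)$. In this picture $\gamma|_U$ is a loxodromic isometry with attracting fixed point $[v_+]$ and repelling fixed point $[v_-]$ on $\partial\HH(U)$. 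Since $e\in U_\QQ$ and $\langle v_-,v_+\rangle_\RR\cap U_\QQ=0$, the point $[e]$ is neither fixed point. Hence $[\gamma^m(e)]\to[v_+]$ and $[\gamma^{-m}(e)]\to[v_-]$ as $m\to\infty$, so both lie in $V$ for $m\gg0$.

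\emph{Cusp ends.} Normalize $w_m\in\partial U^-$ with $[w_m]=[\gamma^m(e)]$ and $q(e,w_m)=-1$. Every point of $\ell(e,\gamma^m(e))$ has the form $[se+w_m]$ with $s>0$. For $s\ge M$ it avoids $\HH(U)_{\mathrm{nr}}$ by \eqref{eq:cusp-neighborhood}, since $[w_m]\in V$. Symmetrically, $\gamma^{-m}$ carries $\ell(e,\gamma^m(e))$ onto $\ell(\gamma^{-m}(e),e)$, and $[\gamma^{-m}(e)]\in V$. So the points whose $\gamma^{-m}$-image is $[se+w'_m]$ with $s\ge M$ (where $w'_m$ is the normalized representative of $[\gamma^{-m}(e)]$) are also fine, by $\gamma$-invariance. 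It remains to handle the middle portion, which corresponds to a compact range of the parameter on both sides.

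\emph{Middle part.} Here I use uniform neighborhoods. First, $\HH(U)_{\mathrm{nr}}$ is closed and disjoint from the axis $\ell(v_-,v_+)$. A fundamental segment of the axis for $\langle\gamma\rangle$ is compact, so it has a neighborhood of some hyperbolic radius $\varepsilon>0$ disjoint from $\HH(U)_{\mathrm{nr}}$. Translating by powers of $\gamma$ gives a full $\varepsilon$-tube $N_\varepsilon$ around the axis that is disjoint from $\HH(U)_{\mathrm{nr}}$. Second, consider the compact pieces $K_+\subset\ell(e,v_+)$ and $K_-\subset\ell(v_-,e)$ obtained by removing the cusp horoball $\{s\ge M\}$ at $[e]$ and an end lying inside $N_{\varepsilon/2}$. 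The latter end exists because $\ell(e,v_\pm)$ is asymptotic to the axis at $[v_\pm]$. These pieces have neighborhoods $N_\pm$ disjoint from $\HH(U)_{\mathrm{nr}}$. Then I argue by thin triangles and convergence of geodesics. As $m\to\infty$, the initial segment of $\ell(e,\gamma^m(e))$ (before it enters $N_\varepsilon$) converges uniformly to the corresponding segment of $\ell(e,v_+)$, so it lies in $\{s\ge M\}\cup N_+$. Applying $\gamma^{-m}$ in the same way, the terminal segment lies in the $\gamma^m$-translate of $\{s\ge M\}\cup N_-$. The remaining portion stays within the $\varepsilon$-tube of the axis. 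This follows from the fact that the geodesic between two points close to $[v_-]$-side and $[v_+]$-side data fellow-travels the axis, by $\delta$-hyperbolicity, with $\delta$ made small relative to $\varepsilon$ after choosing $m$ large.

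The main obstacle is making this middle step rigorous: showing that every point of $\ell(e,\gamma^m(e))$ falls into one of the three regions, uniformly in $m$. My first attempt would be concrete linear algebra rather than abstract hyperbolic geometry. I would write $e=a_+v_++a_-v_-+e_0$ with $e_0\in\langle v_-,v_+\rangle^\perp\cap U_\RR$, where $a_\pm\neq0$ because $e$ is null and not proportional to $v_\pm$. Then $\gamma^m(e)=\rho^m a_+v_++\rho^{-m}a_-v_-+\gamma^m(e_0)$, and $\gamma^m(e_0)$ stays bounded since $\gamma$ acts by a rotation on that positive plane. I would then parametrize $[se+\gamma^m(e)]$ and, after rescaling by $\gamma^{-k}$ for a suitable $k$ depending on $s$, check that the rescaled point converges uniformly to a point of $\ell(e,v_+)$, of $\ell(v_-,v_+)$, or of $\gamma^{-m}$-translated $\ell(v_-,e)$. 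Closedness of $\HH(U)_{\mathrm{nr}}$, its $\gamma$-invariance, and \eqref{eq:three-geodesics} then conclude.
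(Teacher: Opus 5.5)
Your concrete linear-algebra plan is essentially the paper's proof. The cusp estimate \eqref{eq:cusp-neighborhood}, applied at both ends (the second end via $\gamma^{-m}$), confines a bad point $[se+\gamma^m(e)]$ to $1/(MQ_m)<s<MQ_m$ with $Q_m=-q(e,\gamma^m(e))$; rescaling by $\gamma^{-k}$ with $s\rho^{2k-m}\in[1,\rho^2)$ then keeps $k$ and $m-k$ bounded below, so limits land on $\ell(v_-,v_+)$ or on bounded $\gamma$-translates of $\ell(e,v_\pm)$, contradicting \eqref{eq:three-geodesics} by closedness and $\gamma$-invariance of $\HH(U)_{\mathrm{nr}}$. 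The only slip is in your geometric sketch of the middle part: the thinness constant of $\HH(U)\cong\HH^3$ is fixed and cannot be made small relative to $\varepsilon$, so that step should instead use convexity of the $\varepsilon$-tube around the axis, or the recentering by $\gamma^{-k}$ just described.
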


\begin{proof}
We write $e=c_+v_++c_-v_-+z$ for $z\in \langle v_+,v_-\rangle_{\RR}^{\perp}\cap U_{\RR}$. Since $e,v_+,v_-$ lie on distinct rays, we have
\[c_\pm=\frac{q(e,v_\mp)}{q(v_+,v_-)}>0.\]
From the construction, we also know that $\langle v_+,v_-\rangle_{\RR}$ has signature $(1,1)$ and $\langle v_+,v_-\rangle_{\RR}^{\perp}\cap U_{\RR}$ is positive definite and $\gamma$-invariant. Consequently,
\[\lim_{m\to +\infty}\rho^{-m}\gamma^{\pm m}(e)=c_\pm v_\pm.\]
Thus, $[\gamma^{\pm m}(e)]\in V$ for $m\gg0$ and
\[\lim_{m\to +\infty}\rho^{-m}q(e,\gamma^m(e))=
c_+q(e,v_+)<0.\]

Suppose for a contradiction that there is an increasing sequence of integers $m\to\infty$ for which $\ell(e,\gamma^m(e))\cap \HH(U)_{\mathrm{nr}}\ne\emptyset$. For each such $m$, we choose
\[z_m=[s_me+\gamma^m(e)]\in \HH(U)_{\mathrm{nr}}\]
with $s_m>0$. Since $\gamma^{-m}(z_m)\in \HH(U)_{\mathrm{nr}}$ and $q(e,\gamma^{-m}(e))=q(e,\gamma^m(e))<0$,
\eqref{eq:cusp-neighborhood} applied to $e$ and $\gamma^m(e)$ gives
\[\frac{-1}{M q(e,\gamma^m(e))}<s_m<-M q(e,\gamma^m(e)).\]

Now, we choose $k_m\in\ZZ$ such that
\[1\le\lambda_m\coloneqq s_m\rho^{2k_m-m}<\rho^2.\]
After passing to a subsequence of $m$, we may assume that $\lim_{m\to \infty}\lambda_m= r\in[1,\rho^2]$. The preceding bounds imply
\[\rho^{2k_m}>\frac{-\rho^m}{M q(e,\gamma^m(e))},\qquad\rho^{2(m-k_m)}>\frac{-\rho^{m-2}}{M q(e,\gamma^m(e))}.\]
Both right-hand sides have strictly positive limits. Hence $k_m$ and $m-k_m$ are bounded below.

By $\gamma$-invariance, we have
\[\gamma^{-k_m}(z_m)=\left[\lambda_m\rho^{-k_m}\gamma^{-k_m}(e)+\rho^{-(m-k_m)}\gamma^{m-k_m}(e)\right]\in \HH(U)_{\mathrm{nr}}.\]
When $k_m+(m-k_m)=m\to\infty$, a further subsequence satisfies one of the following.
\begin{itemize}
\item $k_m,m-k_m\to\infty$. Then
\[\lim_{m\to +\infty}\gamma^{-k_m}(z_m)=[r c_-v_-+c_+v_+]\in\ell(v_-,v_+).\]

\item $k_m=k\in\ZZ$ is constant. Then
\[\lim_{m\to +\infty}\gamma^{-k_m}(z_m)=[r\rho^{-k}\gamma^{-k}(e)+c_+v_+]\in\gamma^{-k}(\ell(e,v_+)).\]

\item $m-k_m=k\in\ZZ$ is constant. Then
\[\lim_{m\to +\infty}\gamma^{-k_m}(z_m)=[r c_-v_-+\rho^{-k}\gamma^k(e)] \in\gamma^k(\ell(e,v_-)).\]
\end{itemize}
Each of the above limit is a positive combination of distinct vectors in $\partial U^-$, hence lies in $\HH(U)$. By the closedness of $\HH(U)_{\mathrm{nr}}$, each limit also lies in $\HH(U)_{\mathrm{nr}}$. But this contradicts \eqref{eq:three-geodesics} and
$\gamma(\HH(U)_{\mathrm{nr}})=\HH(U)_{\mathrm{nr}}$.
\end{proof}

\begin{proof}[Proof of Proposition~\ref{prop:finite-cover}]
Fix $m\gg0$ as in Lemma~\ref{lem:kahler-interval}, and take
\[w=-\frac{\gamma^m(e)}{q(e,\gamma^m(e))}.\]
Then $[w]\in V$ and $q(e,w)=-1$. For $a,b>0$, the 3-plane
$$W_{[ae+b\gamma^m(e)]}=\mathrm{P}(e,w)\oplus\RR(ae-b\gamma^m(e))$$ is realizable, so no MBM class of type $(1,1)$ on $X_w$ is orthogonal to $ae-b\gamma^m(e)$. Therefore, \cite[Theorem~2.10]{AV15} gives
\[\{ae-b\gamma^m(e)\mid a,b>0\}\subset\cK_{X_w}.\]
Consequently, we have $e,-\gamma^m(e)\in\overline{\cK}_{X_w},$ and $e-\gamma^m(e)\in\cK_{X_w}\cap\Lambda$. In particular, $X_w$ is projective.

By conjugating the complex structure on $X$, we have a compact hyper-K\"ahler manifold $\overline X$ on which $-e$ is semiample. Moreover, we have $(\overline X,\id)\in\mathfrak M$. Since $\gamma\in\Mon^2(X)$, we also have $(\overline X,\gamma^m)\in\mathfrak M$. Moreover, we have $q(-\gamma^m(e),-\gamma^m(h))=q(e,h)>0$ and
\[q(-\gamma^m(e),e-\gamma^m(e))=-q(e,\gamma^m(e))>0.\]
Thus the periods of $(\overline X,\gamma^m)$ and $X_w$ belong to the same component of the period domain in \cite[Definition~2.6]{SV25} for
$\RR_{>0}(-\gamma^m(e))$. Since $-\gamma^m(e)\in\overline{\cK}_{X_w}$, \cite[Theorem~3.7(v)]{SV25} implies that $-\gamma^m(e)$ is semiample on $X_w$. Thus, we get a Lagrangian fibration $g\colon X_w\to B'$ and an ample line bundle $H'$ on $B'$ such that
\[c_1(g^*(H'))\in\RR_{>0}(-\gamma^m(e)).\]

Since $c_1(\pi_w^*(L))\in\RR_{>0}e$, we obtain
\[c_1\bigl((\pi_w,g)^*(L\boxtimes H')\bigr)\in\{ae-b\gamma^m(e)\mid a,b>0\}\subset\cK_{X_w}.\]
Hence, $(\pi_w,g)$ is finite. As a general smooth fiber $A=g^{-1}(b')$ is an abelian variety, base change gives the desired finite surjection $\pi_w|_A\colon A\twoheadrightarrow B$.
\end{proof}

\begin{remark}\label{rem:singular}
Using \cite[Proposition~3.8]{syz-moduli-k3}, \cite[Proposition~5.8 and 7.7]{LehnMongardiPacienza2024}, and results in \cite{BakkerLehn2022}, Proposition~\ref{prop:finite-cover} also holds for a Lagrangian fibration from a compact singular irreducible symplectic variety $X$ with $b_2(X)\ge5$.
\end{remark}

\section{Bases of Lagrangian fibrations}\label{sec:smooth}

In this section, we prove Theorem~\ref{thm:main}.

Let $\pi\colon X\to B$ be a Lagrangian fibration from a compact hyper-K\"ahler manifold $X$, and assume that $B$ has quotient singularities. By Lemma~\ref{lem:fixed-base}, we may assume that $X$ is projective.

Let $p\colon\cB\to B$ be the canonical smooth Deligne--Mumford stack. By the universal property of the canonical stack, we have a lift
\[f\colon X\to\cB\]
with $p\circ f=\pi$. It is clear that $f$ is representable, flat, projective, and has connected geometric fibers.

\begin{lemma}\label{lem:1.1}
For every $i\ge0$, we have $R^if_*(\cO_X)\cong\Omega^i_{\cB}$.
\end{lemma}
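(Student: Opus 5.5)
The plan is to reduce to Matsushita's theorem over the smooth locus and then extend across the singular locus using reflexivity on the smooth stack $\cB$. Recall that for a Lagrangian fibration $\pi\colon X\to B$ with $B$ smooth, Matsushita proved $R^i\pi_*\cO_X\cong\Omega^i_B$. Let $B^\circ\subset B_{\reg}$ be the regular locus, over which $p$ is an isomorphism, and set $X^\circ=\pi^{-1}(B^\circ)$. Matsushita's argument is local on the base. Concretely, the symplectic form $\sigma$ gives $\pi^*\Omega^1_{B}\to\Omega^1_X\xrightarrow{\sigma^{-1}}T_X\to T_{X/B}$ restricted appropriately, which yields a natural morphism $\Omega^1_{B}\to R^1\pi_*\cO_X$. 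It then uses Kollár's torsion-freeness and the decomposition of $R\pi_*\cO_X$, together with $R^i\pi_*\cO_X\cong\wedge^iR^1\pi_*\cO_X$. So we get $R^if_*\cO_X|_{B^\circ}\cong\Omega^i_{B^\circ}$. I would make the map global on $\cB$: the relative version of the construction gives a morphism $\phi\colon\Omega^1_{\cB}\to R^1f_*\cO_X$, using $f$ flat and $T_{X/\cB}$, and cup product then gives $\phi_i\colon\Omega^i_{\cB}\to R^if_*\cO_X$.

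The next step is to show that both sides are locally free, or at least reflexive, on $\cB$. Since $\cB$ is a smooth DM stack, $\Omega^i_{\cB}$ is locally free. For the right side, I would pass to an étale atlas $U\to\cB$ with $U$ smooth. The base change $X_U\to U$ is flat, projective, with connected fibers, and $X_U\to X$ is étale, so $X_U$ is smooth and carries the pulled-back holomorphic symplectic form. Hence $X_U\to U$ is locally a Lagrangian fibration over a smooth base. Then Kollár's theorems (torsion-freeness and $R\pi_*\omega$ splitting, applied to $\omega_{X_U}\cong\cO_{X_U}$ via $\sigma^n$) give that $R^if_*\cO_X|_U$ is torsion free, and in fact reflexive, being a dual of $R^{n-i}f_*\omega_{X_U/U}$ up to twist. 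Matsushita's local argument applies verbatim on $U$, because his proof of $R^i\pi_*\cO\cong\Omega^i$ uses only smoothness of the base, the symplectic form, and Kollár, and not compactness of the total space beyond properness. That would directly give $\phi_i|_U$ an isomorphism.

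If a direct application on $U$ is problematic, I would use the fallback: $\phi_i$ is an isomorphism over $p^{-1}(B^\circ)$, whose complement has codimension $\ge2$ in $\cB$, since $\cB$ has no stabilizers in codimension one and $B$ is normal. Reflexivity of the target then upgrades $\phi_i$ to an isomorphism.

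I expect the main obstacle to be the reflexivity (or local freeness) of $R^if_*\cO_X$ on the stack, that is, transporting Kollár's results to the atlas where the total space is only quasi-projective over a local base. I would handle this by noting that Kollár's torsion-freeness and the degeneration statements are local over the base and hold for projective morphisms from smooth varieties. The needed duality is Grothendieck duality for the flat projective morphism $X_U\to U$, with the relative dualizing sheaf trivialized by $\sigma^n$. The same arguments also give $R^if_*\cO_X=0$ for $i>n$, which is consistent with the claim.
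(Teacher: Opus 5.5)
Your main route is the paper's: its proof just applies Ou's Proposition~3.6, a version of Matsushita's theorem that is local on the base (for Lagrangian fibrations over a smooth, not necessarily compact base), to an étale atlas $U\to\cB$, where $X\times_{\cB}U\to U$ is such a fibration for the reasons you give. Building the global map $\phi_i$ on $\cB$ first is a sensible way to make explicit the descent from the atlas, which the paper leaves implicit; your reflexivity fallback is not needed and would not avoid the local theorem anyway, since the isomorphism over the non-compact $B^\circ$ already requires it.
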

\begin{proof}
The result follows from applying \cite[Proposition~3.6]{Ou2019} to an \'etale atlas of $\cB$.
\end{proof}

Let $\cI_0 \subset I\cB$ be an non-identity connected component of $I\cB$, with evaluation $e_0\colon\cI_0\to\cB$ and tautological automorphism $g$. We set $d=\dim(\cI_0)$. The action of $g$ induces a decomposition
\[e_0^*(\Omega^1_{\cB})=\Omega^1_{\cI_0}\oplus N_{e_0}^{\vee}.\]

\begin{lemma}\label{lem:nonid}
For every non-identity component $\cI_0\subset I\cB$, we have $c_d(T_{\cI_0})=0$ in $\CH^d(\cI_0)_{\QQ}$.
\end{lemma}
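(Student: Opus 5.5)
The approach is to compute a $g$-twisted Chern character of $e_0^*Rf_*\cO_X$ in two ways. On one side it factors as $c_d(T_{\cI_0})$ times an invertible class; on the other side it vanishes, because $g$ acts on the fibres of the pulled-back fibration by an automorphism with a fixed cotangent direction. This also covers $d=0$: there the claim reads $1=0$, so the argument must show that no such isolated component exists.

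\emph{Step 1 (Koszul side).} By Lemma~\ref{lem:1.1} we have $Rf_*\cO_X\simeq\bigoplus_i\Omega^i_{\cB}[-i]$ in $K$-theory, hence
\[\textstyle\sum_i(-1)^i[R^if_*\cO_X]=\lambda_{-1}(\Omega^1_{\cB}).\]
Pull this back by $e_0$. Since $g$ acts on $e_0^*(\cdot)$, we may take the twisted Chern character $\ch_g=\sum_\zeta\zeta\,\ch(\text{$\zeta$-eigensheaf})$. Using the decomposition $e_0^*\Omega^1_{\cB}=\Omega^1_{\cI_0}\oplus N_{e_0}^\vee$ and multiplicativity of $\lambda_{-1}$, we get
\[\ch_g\bigl(e_0^*\lambda_{-1}\Omega^1_{\cB}\bigr)=\ch\bigl(\lambda_{-1}\Omega^1_{\cI_0}\bigr)\cdot\ch_g\bigl(\lambda_{-1}N_{e_0}^\vee\bigr).\]
Here $\ch(\lambda_{-1}\Omega^1_{\cI_0})=c_d(T_{\cI_0})\,\mathrm{td}(T_{\cI_0})^{-1}$ up to sign. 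The factor $\ch_g(\lambda_{-1}N^\vee_{e_0})$ is a unit in $\CH^*(\cI_0)_{\CC}$, since its degree-$0$ part is $\prod(1-\zeta_j)\neq0$, as $g$ has no eigenvalue $1$ on $N_{e_0}$. Hence $c_d(T_{\cI_0})=0$ if and only if $\ch_g(e_0^*Rf_*\cO_X)=0$.

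\emph{Step 2 (geometric side).} Let $X_0\coloneqq X\times_{\cB}\cI_0\to\cI_0$. This is flat and projective, with $g$ acting fibrewise. By flat base change, $e_0^*Rf_*\cO_X=Rf_{0*}\cO_{X_0}$ $g$-equivariantly. I would apply the equivariant (Lefschetz--Riemann--Roch) formula for $f_0$, which expresses $\ch_g(Rf_{0*}\cO_{X_0})$ as a pushforward from the fixed locus $X_0^g$, with integrand
\[\mathrm{td}(T_{X_0^g/\cI_0})\,\ch_g\bigl(\lambda_{-1}N^\vee_{X_0^g/X_0}\bigr)^{-1}.\]
It then suffices to show that this vanishes. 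On a general fibre $F$, an abelian variety, $g$ acts on $\rH^1(F,\cO_F)=T^*_b\cB$ with a nonzero invariant part $\Omega^1_{\cI_0}\otimes k(b)$ when $d>0$. The holomorphic Lefschetz number $\det(1-g\mid\rH^{0,1})$ therefore vanishes. Equivalently, $F^g$ is a union of positive-dimensional translates of an abelian subvariety on which $g$ acts by translation, and its Todd contribution is zero.

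To globalize, I would use the symplectic form. Through $\pi_*T_{X/B}\cong\Omega^1_B$, it identifies the $g$-invariant cotangent directions $\Omega^1_{\cI_0}$ with $g$-invariant vertical vector fields on $X_0$. These are tangent to $X_0^g$ and give $X_0^g\to\cI_0$ a fibrewise action of a positive-dimensional abelian scheme, generically. A relative abelian-scheme action (or free fibrewise translation by torsion) kills the relative Todd pushforward rationally, so the pushforward is zero in $\CH^*(\cI_0)_{\QQ}$. The case $d=0$ needs separate treatment. There $\Omega^1_{\cI_0}=0$, so Step 1 gives the unit $\ch_g(\lambda_{-1}N^\vee)\neq0$. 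I would instead show that $g$ cannot act on $F$ with all eigenvalues on $\rH^{0,1}$ different from $1$ while $\sum(-1)^i\operatorname{tr}(g\mid\rH^i(\cO_F))$ is compatible with the fixed-point count, ruling out isolated components, or absorb this case into the same vanishing.

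\emph{Expected obstacle.} The main difficulty is Step~2: making the fibrewise vanishing of the Lefschetz number hold at the level of rational Chow classes on $\cI_0$, including over singular fibres. Concretely, the task is to produce the abelian-scheme action on $X_0^g$ (or a $\mathbb G$-action forcing zero pushforward) from the invariant part of $\Omega^1_{\cB}$. My first attempt would be to use the $\Omega^1_{\cB}$-action by vertical flows, in the style of \cite{dCKS}, restricted to the $g$-invariant summand $\Omega^1_{\cI_0}$.
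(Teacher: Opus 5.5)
Your Step~1 is correct and matches the second half of the paper's computation. You split $e_0^*\Omega^1_{\cB}=\Omega^1_{\cI_0}\oplus N_{e_0}^{\vee}$, divide by the unit $\ch_g(\lambda_{-1}N_{e_0}^{\vee})$, and use $\ch(\lambda_{-1}\Omega^1_{\cI_0})\operatorname{Td}(T_{\cI_0})=c_d(T_{\cI_0})$.

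The gap is Step~2, which carries all the content and which you leave open. Several pieces are unproved: the abelian-scheme action on $X_0^g$ obtained from vertical flows, the claim that such an action kills the pushforward in $\CH^*(\cI_0)_{\QQ}$, the behaviour over singular fibres, and the case $d=0$. Your description of $F^g$ as a union of positive-dimensional translates is also inaccurate. In fact $F^g=\emptyset$, so there is no fixed locus for an abelian scheme to act on.

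The missing observation is that $f\colon X\to\cB$ is representable and $X$ is a scheme. Locally $\cB=[\CC^n/G]$, and $X\times_{\cB}\CC^n\to X$ is a $G$-torsor, so $G$ acts freely on $X\times_{\cB}\CC^n$. Hence $g$ acts freely on $X_0=X\times_{\cB}\cI_0$ and $X_0^g=\emptyset$. Equivalently, $IX=X$ and $If\colon IX\to I\cB$ lands in $I\cB^e$. So in any Lefschetz--Riemann--Roch formula the fixed-point contribution over $\cI_0$ vanishes identically. This works for every $d$ at once; for $d=0$ it yields $[\cI_0]=0$, i.e.\ there is no such component. No symplectic geometry or fibrewise analysis is needed.

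The paper implements this with a Riemann--Roch theorem valid over a stack. You need such a theorem anyway, since your base $\cI_0$ is a Deligne--Mumford stack, and an equivariant Lefschetz--Riemann--Roch over it cannot be quoted for free. By Kresch, $\cB$ is a quotient stack, so the Krishna--Sreedhar transformation $\tau_{\cB}$ into $\CH^*(I\cB)$ exists and commutes with proper pushforward. As $If(IX)$ misses $\cI_0$, this gives $\tau_{\cB}([Rf_*\cO_X])|_{\cI_0}=0$. Edidin's description of the $\cI_0$-component of $\tau_{\cB}$ then converts this into your Step~1 expression.
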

\begin{proof}
By \cite[Theorem~4.4]{Kresch}, $\cB$ is a quotient stack. We denote by $\tau_{\cB}$ the homomorphism constructed in \cite[Definition~10.8]{KrishnaSreedhar2017}. Since $IX=X$, the image of the induced morphism $If\colon IX\to I\cB$ avoids $\cI_0$. Thus, \cite[Theorem~10.11]{KrishnaSreedhar2017} gives $\tau_{\cB}([Rf_*(\cO_X)])|_{\cI_0}=0$.

Recall that for any vector bundle $E$ of rank $r$, we define a class in the K-group as $\lambda_{-1}(E)\coloneqq \sum_{i=0}^r(-1)^i[\wedge^i(E)]$. Thus, we have $[Rf_*(\cO_X)]=\lambda_{-1}(\Omega^1_{\cB})$. Therefore, we obtain 
\[\begin{aligned}
0&=\left.\tau_{\cB}([Rf_*(\cO_X)])\right|_{\cI_0}\\
&=\frac{\ch\bigl(t_g(\lambda_{-1}(e_0^*(\Omega^1_{\cB})))\bigr)}
{\ch(t_g(\lambda_{-1}(N_{e_0}^{\vee})))}
\operatorname{Td}(T_{\cI_0})\\
&=\ch\bigl(\lambda_{-1}(\Omega^1_{\cI_0})\bigr)
\operatorname{Td}(T_{\cI_0})=c_d(T_{\cI_0}).
\end{aligned}\]
where $\ch(t_g(-))$ is defined in \cite[Definition~4.8 and Section~4.3.2]{Edidin2013}, the second equality follows from \cite[Remark~10.10]{KrishnaSreedhar2017} and \cite[Theorem~5.1]{Edidin2013}, the third equality follows from the above decomposition of $e_0^*(\Omega^1_{\cB})$, and the last equality follows from \cite[Example~3.2.5]{Fulton1998}.
\end{proof}

\begin{proof}[Proof of Theorem~\ref{thm:main}]
By Proposition~\ref{prop:finite-cover} and Lemma~\ref{lem:nonid}, the result follows from Theorem~\ref{thm:smooth} and the fact that $B$ has Picard number one by \cite{Matsushita}.
\end{proof}

\end{document}